\newcommand{\sss}{\scriptscriptstyle}
\newcommand{\pprob}{\mathbb{P}}
\newcommand{\Prob}[1]{\pprob\left(#1\right)}
\newcommand{\Probn}[1]{\pprob_n\left(#1\right)}
\newcommand{\expec}{\mathbb{E}}
\newcommand{\Exp}[1]{\expec\left[#1\right]}
\newcommand{\Expn}[1]{\expec_n\left[#1\right]}
\newcommand{\Varn}[1]{\textup{Var}_n\left(#1\right)}
\newcommand{\plim}{\ensuremath{\stackrel{\pprob}{\longrightarrow}}}
\newcommand{\dlim}{\ensuremath{\stackrel{d}{\longrightarrow}}}
\newcommand{\ind}[1]{\mathbbm{1}_{\left\{#1\right\}}}
\newcommand{\bigO}[1]{O\left(#1\right)}
\newcommand{\bigOp}[1]{O_{\sss\pprob}\left(#1\right)}
\newcommand\abs[1]{\left|#1\right|}
\newcommand{\me}{\textup{e}}
\newcommand{\dd}{{\rm d}}
\newcommand{\Der}{D^{\sss\mathrm{(er)}}}
\newcommand{\op}{o_{\sss\pprob}}
\newcommand{\Mn}{M^{\sss(n)}}
\newtheorem{theorem}{Theorem}[section]
\newtheorem{definition}{Definition}[section]
\newtheorem{lemma}[theorem]{Lemma}
\newtheorem{proposition}[theorem]{Proposition}
\newtheorem{remark}[definition]{Remark}
\let\OLDthebibliography\thebibliography
\renewcommand\thebibliography[1]{
	\OLDthebibliography{#1}
	\setlength{\parskip}{0pt}
	\setlength{\itemsep}{0pt plus 0.3ex}
}
	\author{Clara Stegehuis}
\begin{document}
	\title{Degree correlations in scale-free null models}	
		\affil{Eindhoven University of Technology}

\graphicspath{{Figures/}}

\maketitle
\begin{abstract}
	We study the average nearest neighbor degree $a(k)$ of vertices with degree $k$. In many real-world networks with power-law degree distribution $a(k)$ falls off in $k$, a property ascribed to the constraint that any two vertices are connected by at most one edge. We show that $a(k)$ indeed decays in $k$ in three simple random graph null models with power-law degrees: the erased configuration model, the rank-1 inhomogeneous random graph and the hyperbolic random graph. We find that in the large-network limit for all three null models $a(k)$ starts to decay beyond $n^{(\tau-2)/(\tau-1)}$ and then settles on a power law $a(k)\sim k^{\tau-3}$, with $\tau$ the degree exponent.	\end{abstract}

\section{Introduction}
Complex networks are often studied through mathematical analysis of null models that can match the network degree distribution. For scale-free networks, this degree distribution follows a power law. In many real-world networks, like the Internet, social networks and biological networks, the power-law exponent $\tau$ is found to be between 2 and 3~\cite{albert1999,faloutsos1999,jeong2000,vazquez2002}. In such scale-free networks, high-degree vertices called hubs are likely present, and give rise to scale-free properties such as ultra-small distances and ultra-fast information spreading. The hubs also crucially influence local properties such as clustering~\cite{stegehuis2017,hofstad2017b} and the occurrence of subgraphs~\cite{ostilli2014}. Clustering can be measured in terms of the probability $c(k)$ that a degree-$k$ vertex creates triangles. Both empirically~\cite{pastor2001b,maslov2004} and theoretically~\cite{colomer2012,stegehuis2017} it was shown that $c(k)$ falls off with $k$, and hence that hubs are less likely to take part in triangles. 

Whereas triangles and even larger subgraphs require to study the correlation between at least three vertices, we study in this paper the degree correlation between pairs of two vertices in terms of $a(k)$, the average degree of a neighbor of a vertex of degree $k$.  
According to several studies~\cite{catanzaro2005,barabasi2016}, this degree-degree correlation is an essential local network property, because it also falls off with $k$ and can largely explain the fall-off of $c(k)$~\cite{catanzaro2005,boguna2003,serrano2006}.
We provide support for this statement, by identifying an explicit relation between
$a(k)$ and $c(k)$ for large $k$. But the main goal of this paper is to explain the full spectrum $k\mapsto a(k)$ for all $k$, and to provide theoretical underpinning for the widely observed $a(k)$ fall-off. 

There exist a vast array of papers, empirical, non-rigorous and rigorous, on $a(k)$~\cite{yao2017,catanzaro2005,pastor2001b,park2003,barabasi2016,boguna2003,mayo2015,barrat2004,boguna2004,vazquez2003}. The function $k\mapsto a(k)$ describes the correlation between the degrees on the two sides of an edge, and  classifies the network into one of the following three categories~\cite{newman2002assortative}.
When $a(k)$ increases with $k$, the network is said to be \emph{assortative}: vertices with high degrees mostly connect to other vertices with high degrees. When $a(k)$ decreases in $k$, the network is said to be \emph{disassortative}. Then high-degree vertices typically connect to low-degree vertices. When $a(k)$ is independent of $k$, the network is said to be \emph{uncorrelated}. In this case, the degrees on the two different sides of an edge can be viewed as fully independent, a desirable property when studying the mathematical properties of networks. But the fact is that the majority of real-world networks with power-law degrees and unbounded degree fluctuations ($\tau\in(2,3)$) show a clear decay of $a(k)$ as $k$ grows large~\cite{pastor2001b, maslov2004}. Figure~\ref{fig:akyoutube} illustrates this for the Youtube friendship network~\cite{snap}. Hence, scale-free networks are inherently disassortative, and hubs are predominantly connected to small-degree nodes. In complex network theory, such a well established empirical fact then asks for a theoretical explanation. Typically, this explanation comes in the form of a null model that only matches the degree distribution and has the empirical observation as a property, in this case disassortivity, or more specifically, the essential features of the curve $k\mapsto a(k)$.

\begin{figure}[tb]	
	\centering
	\includegraphics[width=0.45\textwidth]{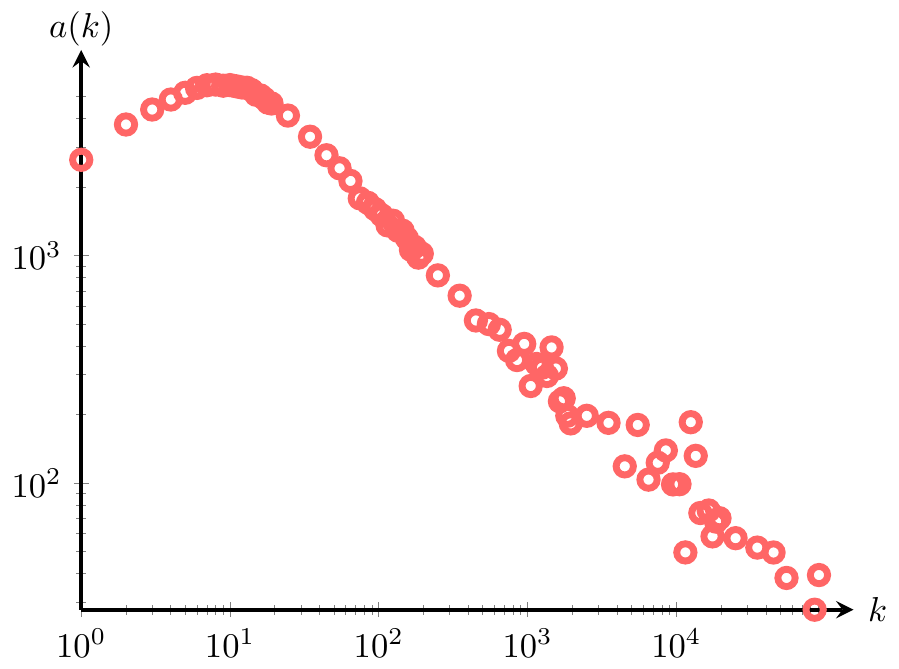}
	\caption{$a(k)$ for the Youtube friendship network~\cite{snap}}
	\label{fig:akyoutube}
\end{figure}
	
The popular configuration model~\cite{bollobas1980} generates random networks with any prescribed degree distribution, but only results in uncorrelated networks when including self-loops and multi-edges. Hence, the configuration model can never explain the $a(k)$ fall-off. We therefore resort to different null models that, contrary to the configuration model, generate random networks without self-loops and multi-edges. The resulting {\it simple} random networks are therefore prone to the structural correlations that come with the presence of hubs. We study $a(k)$ for three widely used null models: the erased configuration model, the rank-1 inhomogeneous random graph (also called hidden variable model) and the hyperbolic random graph. We show that these models display universal $a(k)$-behavior: For $k$ sufficiently small, $a(k)$ is independent of $k$. Thus, in simple scale-free networks, small-degree vertices have similar neighbors. We then identify the value of $k$ as of which $a(k)$ starts decaying. An intuitive explanation for the $a(k)$ fall-off is that in simple networks, high-degree vertices have so many neighbors that they must reach out to lower-degree vertices, because networks typically only contain a small amount of high-degree vertices. This causes the average degree of a neighbor of a high-degree vertex to be smaller. Thus, single-edge constraints may cause the decay of $a(k)$. 

\section{Main results}
We first define the average nearest neighbor degree $a(k,G)$ of a graph $G$ in more detail. Let $(D_i)_{i\in [n]}$ be the degree sequence of the graph, where $[n]=1,\ldots,n$. Furthermore, let $N_k$ denote the total number of degree $k$ vertices in the graph, and $\mathcal{N}_i$ denote the neighborhood of vertex $i$. The average nearest neighbor degree of graph $G$ is then defined as
	\begin{equation}\label{eq:ak}
		a(k,G)=\frac{1}{k N_k}\sum_{i:D_i=k}\sum_{j\in\mathcal{N}_i}D_j.
	\end{equation}
	Note that it is possible that no vertex of degree $k$ exists in the graph. We therefore analyze
	\begin{equation}
		a_\varepsilon(k,G)=\frac{1}{k|M_\varepsilon(k)|}\sum_{i\in M_\varepsilon(k)}\sum_{j\in\mathcal{N}_i}D_j,
	\end{equation}
	where $M_\varepsilon(k)=\{i\in[n]:D_i\in[k(1-\varepsilon),k(1+\varepsilon)]\}$. We will show that in the models we analyze, $M_{\varepsilon}(k)$ is non-empty with high probability, so that $a_{\varepsilon}(k)$ is well defined. Note that $a(k,G)=a_0(k,G)$. 
	We now analyze $a_\varepsilon(k,G)$, first for the erased configuration model in Subsection~\ref{sub1} and then for the rank-1 inhomogeneous random graph and the hyperbolic random graph in Subsection~\ref{sub2}. 
	
		\subsection{The erased configuration model}\label{sub1}
	Given a positive integer $n$ and a degree sequence $(D_1,D_2,\ldots, D_n)$ such that the sum of the degrees is even, the configuration model is a (multi)graph where vertex $i$ has degree $D_i$~\cite{bollobas1980}. We start with $D_j$ free half-edges adjacent to vertex $j$, for $j=1, \ldots, n$. The configuration model is then constructed by pairing free half-edges uniformly at random into edges, until no free half-edges remain. Conditionally on obtaining a simple graph, the resulting graph is a {uniform} graph with the prescribed degrees. This is why the configuration model is often used as a {null model} for real-world networks with given degrees. 
	When the degree distribution has an infinite second moment however, the probability of obtaining a simple graph tends to zero as $n$ grows large (see e.g., \cite[Chapter 7]{hofstad2009}). In this setting the configuration model cannot be used as a null model for simple real-world networks anymore. 
	The erased configuration model is the model where all multiple edges are merged and all self-loops are removed~\cite{britton2006}. Where the configuration model has hard constraints on the degrees but does not create a simple graph, the erased configuration model generates a simple graph while putting soft constraints on the degrees. In particular, we take the original degree sequence to be an i.i.d.\ sample from the distribution
	\begin{equation}
		\label{D-tail}
		\pprob(D=k)=c k^{-\tau}(1+o(1)),\ \quad \text{when }k\to\infty,
	\end{equation}
	 where $\tau\in(2,3)$ so that $\expec[D^2]=\infty$. We denote $\Exp{D}=\mu$. When this sample constructs a degree sequence such that the sum of the degrees is odd, we add an extra half-edge to the last vertex. This does not affect our computations. We denote the actual degree sequence of the graph after merging the multiple edges and self-loops by $(\Der)_{i\in[n]}$, and we call these the erased degrees.
	 
	 \paragraph{Stable random variables.}
	 The limit theorem of $a(k,G_n)$ for the erased configuration model contains stable random variables. A random variable follows a stable distribution if for any positive numbers $a_1$ and $a_2$, there exists a real number $b_1=b_1(a_1,a_2)$ and a positive number $b_2=b_2(a_1,a_2)$ such that 
	 \begin{equation}
		 a_1X_1+a_2X_2\overset{d}{=}b_1+b_2X,
	 \end{equation}
	 where $X_1$ and $X_2$ are independent copies of $X$. Stable random variables can be parametrized by four parameters, and are usually denoted by $\mathcal{S}_\alpha(\sigma,\beta,\mu)$ (see for example~\cite[Chapter 4]{whitt2006}). Throughout this paper, we will only use stable distributions with $\sigma=1,\beta=1,\mu=0$ and we denote $\mathcal{S}_\alpha=\mathcal{S}_\alpha(1,1,0)$ to ease notation.

	We now state the main result for the erased configuration model: 
\begin{theorem}[$a(k,G)$ in the erased configuration model]\label{thm:ak}
		Let $(G_n)_{n\geq 1}$ be a sequence of erased configuration models on $n$ vertices, where the degrees are an i.i.d.\ sample from~\eqref{D-tail}. Take $\varepsilon_n$  such that $\lim_{n\to\infty}\varepsilon_n=0$ and $\lim_{n\to\infty}n^{-1/(\tau-1)} k \varepsilon_n=\infty$ and let $\Gamma$ denote the Gamma function. 
	\begin{enumerate}[label=(\roman*)]
		\item 
  For $k \ll n^{(\tau-2)/(\tau-1)}$,
	\begin{equation}\label{eq:aksmall}
		\frac{a_{\varepsilon_n}(k,G_n)}{n^{(3-\tau)/(\tau-1)}}\dlim \frac{1}{\mu}\left(\frac{2c \Gamma(\tfrac{5}{2}-\tfrac{1}{2}\tau )}{ (\tau-1)(3-\tau)}\cos\left(\frac{\pi(\tau-1)}{4}\right)\right)^{2/(\tau-1)} \mathcal{S}_{(\tau-1)/2},
	\end{equation}
where $\mathcal{S}_{(\tau-1)/2}$ is a stable random variable.
\item
	For $ n^{(\tau-2)/(\tau-1)}\ll k \ll n^{1/(\tau-1)}$, 
	\begin{equation}\label{cc1}
		\frac{a_{\varepsilon_n}(k,G_n)}{n^{3-\tau}k^{\tau-3}}\plim -c \mu^{2-\tau}\Gamma(2-\tau).
	\end{equation}
\end{enumerate}
\end{theorem}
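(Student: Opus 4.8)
The plan is to condition throughout on the i.i.d.\ degree sequence $(D_i)_{i\in[n]}$, for which $L_n:=\sum_{i\in[n]}D_i=\mu n(1+\op(1))$. I would start from the standard description of the erased configuration model given the degrees: two vertices $i\neq j$ are adjacent with probability $p_{ij}=1-\e^{-D_iD_j/L_n}(1+o(1))$, these events being asymptotically independent, and the erased degrees satisfy $\Der_j=D_j(1+\op(1))$ uniformly in $j$ (the relative erasure is $\bigO{(D_j/n)^{\tau-2}}$, negligible since the maximal degree is $\bigO{n^{1/(\tau-1)}}\ll n$; the same bound lets me replace $M_{\varepsilon_n}(k)$, defined through the erased degrees, by the set of original degrees in $[k(1-\varepsilon_n),k(1+\varepsilon_n)]$, of size $\abs{M_{\varepsilon_n}(k)}=2c\varepsilon_n n k^{1-\tau}(1+\op(1))$, which is non-empty with high probability under the stated condition on $\varepsilon_n$). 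Writing
\[
\sum_{i\in M_{\varepsilon_n}(k)}\sum_{j\in\mathcal N_i}\Der_j=\sum_{i\in M_{\varepsilon_n}(k)}\sum_{j\neq i}\ind{i\sim j}\,\Der_j,
\]
I would analyze this doubly-indexed sum through its conditional mean and variance given the degrees.

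The conditional mean, after replacing $\ind{i\sim j}$ by $p_{ij}$ and $\Der_j$ by $D_j$, factorizes as $\abs{M_{\varepsilon_n}(k)}\,h_n(k)$ with $h_n(k):=\sum_{j}(1-\e^{-kD_j/L_n})D_j$, since every $i\in M_{\varepsilon_n}(k)$ has $D_i=k(1+o(1))$. Approximating the empirical degree law by $n\,\pprob(D\in\cdot)$ and substituting $u=kx/(\mu n)$ gives
\[
h_n(k)\approx nc\int_1^\infty(1-\e^{-kx/(\mu n)})x^{1-\tau}\,\dd x=nc\Big(\tfrac{\mu n}{k}\Big)^{2-\tau}\int_{k/(\mu n)}^\infty(1-\e^{-u})u^{1-\tau}\,\dd u .
\]
For $k\ll n$ the lower limit vanishes and $\int_0^\infty(1-\e^{-u})u^{1-\tau}\,\dd u=-\Gamma(2-\tau)$, so that $h_n(k)=-c\mu^{2-\tau}\Gamma(2-\tau)\,n^{3-\tau}k^{\tau-2}(1+o(1))$; dividing by $k\abs{M_{\varepsilon_n}(k)}$ yields precisely the constant in~\eqref{cc1}. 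This computation is the backbone of part (ii), where I would show by a second-moment estimate that the sum concentrates on its mean. The key structural point is that the convergent integral owes its value to the saturation $1-\e^{-u}\to1$, i.e.\ to vertex $i$ connecting to every hub with probability close to one; when $k\gg n^{(\tau-2)/(\tau-1)}$ even the largest degree obeys $kD_{\max}/L_n\gtrsim1$, so the hub connections carry no randomness, the largest single summand $\asymp n^{1/(\tau-1)}$ is negligible against $h_n(k)\asymp n^{3-\tau}k^{\tau-2}$, and both the edge-level and degree-level fluctuations are of smaller order.

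For part (i) the mechanism is different, and this is where I would spend most effort. When $k\ll n^{(\tau-2)/(\tau-1)}$ one has $kD_j/L_n\ll1$ for all $j$, so $p_{ij}\approx D_iD_j/(\mu n)$ and, summing first over $i$,
\[
\sum_{i\in M_{\varepsilon_n}(k)}\sum_{j\neq i}\ind{i\sim j}\,\Der_j=\frac{\abs{M_{\varepsilon_n}(k)}\,k}{\mu n}\sum_{j}D_j^2\,(1+\op(1)),
\]
whence $a_{\varepsilon_n}(k,G_n)=\tfrac{1}{\mu n}\sum_{j}D_j^2\,(1+\op(1))$, which is $k$-independent and does not self-average. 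Here the condition $n^{-1/(\tau-1)}k\varepsilon_n\to\infty$ is exactly what makes the number of $M_{\varepsilon_n}(k)$-vertices attached to each relevant hub tend to infinity, so that the edge-level fluctuations vanish and the surviving randomness is entirely that of the degree sequence. It then remains to note that $\pprob(D^2>x)=\pprob(D>\sqrt x)\sim\tfrac{c}{\tau-1}x^{-(\tau-1)/2}$, so $D_j^2$ lies in the domain of attraction of a stable law of index $\alpha=(\tau-1)/2\in(\tfrac12,1)$. The classical stable limit theorem gives $n^{-1/\alpha}\sum_j D_j^2\dlim(C\Gamma(1-\alpha)\cos(\tfrac{\pi\alpha}{2}))^{1/\alpha}\mathcal S_\alpha$ with $C=c/(\tau-1)$; using $n^{1/\alpha-1}=n^{(3-\tau)/(\tau-1)}$ and $\Gamma(\tfrac{5-\tau}{2})=\tfrac{3-\tau}{2}\Gamma(\tfrac{3-\tau}{2})=\tfrac{3-\tau}{2}\Gamma(1-\alpha)$, collecting the constants reproduces exactly the prefactor in~\eqref{eq:aksmall}.

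The main obstacle is the uniform error control underpinning this dichotomy. I must show that successively replacing $\ind{i\sim j}$ by $p_{ij}$, then $p_{ij}$ by its linear (resp.\ saturated) approximation, and $\Der_j$ by $D_j$, produces errors negligible against the signal uniformly over the degree window and over the full range of $k$ on each side of the threshold; and I must bound the variance of a sum whose terms share the vertex $j$ (and, through $\Der_j=\sum_l\ind{j\sim l}$, the vertex $l$) and are therefore genuinely correlated. The most delicate point is the crossover $k\asymp n^{(\tau-2)/(\tau-1)}$, where $h_n(k)$ and the stable contribution $\tfrac{1}{\mu n}\sum_j D_j^2$ are of the same order; pinning down that the transition between the deterministic and the stable regime occurs sharply at this scale, rather than proving either limit comfortably in its interior, is the crux of the argument.
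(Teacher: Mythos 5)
Your proposal is correct in outline and, for part (ii), follows essentially the paper's route: condition on the degrees, replace edge indicators by $1-\e^{-D_uD_v/L_n}$, pass from the empirical degree sum to the integral, and conclude by a conditional second-moment bound. The one device you are missing there is the truncation the paper uses to make the empirical-to-integral step rigorous: it restricts to the window $W_n^k(\delta)=\{u:D_u\in[\delta n/k,\,n/(\delta k)]\}$, proves convergence of the rescaled empirical measure only on that compact window (Lemma~\ref{lem:convexp}), shows by a first-moment bound that the complement contributes $\bigO{\delta^{\kappa}}$ (Proposition~\ref{prop:minor}), and only then sends $\delta\to 0$; your remark that the single largest summand is negligible does not by itself control the full upper and lower tails of the sum. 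For part (i) you take a genuinely different route: you linearize the connection probability and promise a variance bound for $\sum_{i\in M_{\varepsilon_n}(k)}\sum_j\ind{i\sim j}D_j$, whereas the paper couples the neighbor degrees of a degree-$k$ vertex to i.i.d.\ samples of the size-biased distribution $D_n^*$ and bounds the expected number of miscouplings. The coupling buys two things: it sidesteps the covariances between edge indicators created by the half-edge pairing (which your variance computation would have to bound explicitly, since edges are not independent given the degrees), and it makes the $k$-independence of the limit, hence the joint convergence of Remark~\ref{rem:joint}, immediate. Your route is workable — the analogous variance computation is exactly what the paper carries out for large $k$ in Lemma~\ref{lem:condvar} — but it is the one substantive step you leave unexecuted, and it is where the averaging over all of $M_{\varepsilon_n}(k)$ (not just a single degree-$k$ vertex, whose neighbor-degree average does \emph{not} concentrate) must enter. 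Finally, your worry that the crossover at $k\asymp n^{(\tau-2)/(\tau-1)}$ is the crux is misplaced for the theorem as stated: both regimes are open ($k\ll$ and $k\gg$ the threshold), so no sharp transition analysis is required.
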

\begin{remark}\label{rem:joint}
	The convergence in~\eqref{eq:aksmall} also holds jointly in $k$ and $n$, so that for $m\geq 1$ and $1\leq k_1<k_2<\dots <k_m \ll n^{(\tau-2)/(\tau-1)}$,
	\begin{equation}\label{eq:akjoint}
	\frac{(a_{\varepsilon_n}(k_1,G_n),\dots,a_{\varepsilon_n}(k_m,G_n))}{n^{(3-\tau)/(\tau-1)}}\dlim  \frac{1}{\mu}\left(\frac{2c \Gamma(\tfrac{5}{2}-\tfrac{1}{2}\tau )}{ (\tau-1)(3-\tau)}\cos\left(\frac{\pi(\tau-1)}{4}\right)\right)^{2/(\tau-1)} \mathcal{S}_{(\tau-1)/2}\boldsymbol{1},
	\end{equation}
	where $\boldsymbol{1}\in\mathbb{R}^m$ is a vector with $m$ entries equal to 1.
\end{remark}

\begin{figure}[tb]
	\centering
	\includegraphics[width=0.4\textwidth]{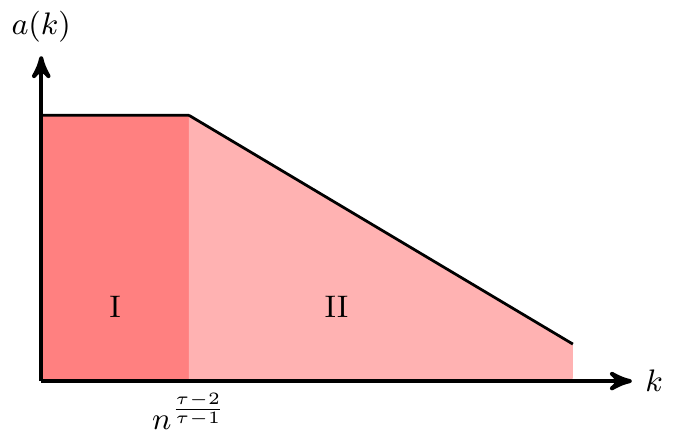}
	\caption{Illustration of the behavior of $a(k,G_n)$ in the erased configuration model}
	\label{fig:acurve}
\end{figure}
Figure~\ref{fig:acurve} illustrates the behavior of $a(k,G_n)$. 
First, $a(k,G_n)$ stays flat and does not depend on $k$. After that, $a(k,G_n)$ starts decreasing in $k$, which shows that the erased configuration model indeed is a disassortative random graph. Theorem~\ref{thm:ak} shows that $n^{(\tau-2)/(\tau-1)}$ serves as a threshold. Thus, the negative degree-degree correlations due to the single-edge constraint only affect vertices of degrees at least $n^{(\tau-2)/(\tau-1)}$. 
This can be understood as follows. In the erased configuration model the maximum contribution to $a(k,G)$ (see Propositions~\ref{prop:minor} and~\ref{prop:akmajor}) comes from vertices with degrees proportional to $n/k$. The maximal degree in an observation of $n$ i.i.d.\ power-law distributed samples is proportional to $n^{1/(\tau-1)}$ w.h.p. Therefore, if $k\ll n^{(\tau-2)/(\tau-1)}$, such vertices with degree proportional to $n/k$ do not exist w.h.p. This explains the two regimes.

 For $k$ small, $a(k,G_n)$ converges to a stable random variable, as was also shown in~\cite{yao2017} for $k$ fixed. Thus, for $k$ small, different instances of the erased configuration model show wild fluctuations. The joint convergence in $k$ of Remark~\ref{rem:joint} shows that $a(k,G_n)$ still forms a flat curve in $k$ for one realization of an erased configuration model when $k$ is small. In contrast, $a(k,G_n)$ converges to a constant for large $k$-values, so that different realizations of erased configuration models will result in similar $a(k,G_n)$-values.

\subsection{Sketch of the proof}
We now give a heuristic proof of Theorem~\ref{thm:ak}. Conditionally on the degrees, the probability that vertices with degrees $D_i$ and $D_j$ are connected in the erased configuration model can be approximated by~\cite{hofstad2005}
\begin{equation}\label{appp}
1-\me^{-D_iD_j/\mu n}.
\end{equation}
Let $v\in M_{\varepsilon_n}(k)$, and let $X_{ik}$ denote the indicator that vertex $i$ is connected to $v$. The expected degree of a neighbor of $v$ can then be approximated by
\begin{equation}\label{eq:akheuristic}
a_{\varepsilon_n}(k,G_n)\approx k^{-1}\sum_{i\in[n]}{D_i\Prob{X_{iv}=1}} \approx  k^{-1}\sum_{i\in [n]}D_i(1-\me^{-D_ik/(\mu n)}).
\end{equation}
The maximum degree in an i.i.d.\ sample from~\eqref{D-tail} scales as $n^{1/(\tau-1)}$ w.h.p.. Thus, as long as $k\ll n^{(\tau-2)/(\tau-1)}$, we can Taylor expand the exponential so that 
\begin{equation}
a_{\varepsilon_n} (k,G_n)\approx  \frac{1}{\mu n}\sum_{i\in [n]}D_i^2.
\end{equation}
Because $(D_i)_{i\in[n]}$ are samples from a power-law distribution with infinite second moment, the Stable Law Central Limit Theorem gives Theorem~\ref{thm:ak}(i). 

When $k\gg n^{(\tau-2)/(\tau-1)}$, we approximate the sum in~\eqref{eq:akheuristic} by the integral 
\begin{equation*}
a_{\varepsilon_n} (k,G_n)\approx cnk^{-1}\int_{1}^{\infty}x^{1-\tau}(1-\me^{-xk/(\mu n)})\dd x = c\mu^{2-\tau}\left(\frac{n}{k}\right)^{3-\tau}\int_{k/(\mu n)}^{\infty}y^{1-\tau}(1-\me^{-y})\dd y,
\end{equation*}
using the degree distribution~\eqref{D-tail} and the change of variables $y=xk/(\mu n)$.
When $ k \ll n$, we can approximate this by
\begin{equation}
	 a_{\varepsilon_n} (k,G_n)\approx c\mu^{2-\tau}\left(\frac{n}{k}\right)^{3-\tau}\int_{0}^{\infty}y^{1-\tau}(1-\me^{-y})\dd y=-c\mu^{2-\tau}\left(\frac{n}{k}\right)^{3-\tau}\Gamma(2-\tau).
\end{equation}
The proof of Theorem~\ref{thm:ak}(ii) then consists of showing that the above approximations are indeed valid. 
We prove Theorem~\ref{thm:ak} in detail in Sections~\ref{sec:ksmall} and~\ref{sec:klarge}.

\subsection{Two more null models}\label{sub2}

We now turn to the rank-1 inhomogeneous random graph (or hidden variable model). This model constructs simple graphs with soft constraints on the degree sequence~\cite{chung2002,boguna2003}. The graph consists of $n$ vertices with weights $(h_i)_{i\in[n]}$. These weights are an i.i.d.\ sample from the power-law distribution~\eqref{D-tail}. We denote the average value of the weights by $\mu$. Then, every pair of vertices with weights $(h,h')$ is connected with probability $p(h,h')$. In this paper, we take
	\begin{equation}\label{eq:phh}
		p(h,h')=\min\left({h h'}/(\mu n),1\right),
	\end{equation}
	which is the Chung-Lu version of the rank-1 inhomogeneous random graph~\cite{chung2002}. This connection probability ensures that the degree of a vertex with weight $h$ will be close to $h$~\cite{boguna2003}. We show the following result:
\begin{theorem}[$a(k,G_n)$ in the rank-1 inhomogeneous random graph]\label{thm:akhvm}
	Let $(G_n)_{n\geq 1}$ be a sequence of rank-1 inhomogeneous random graphs on $n$ vertices, where the weights are an i.i.d.\ sample from~\eqref{D-tail}. Take $\varepsilon_n$  such that $\lim_{n\to\infty}\varepsilon_n=0$ and $\lim_{n\to\infty}n^{-1/(\tau-1)} k \varepsilon_n=\infty$ and let $\Gamma$ denote the Gamma function. 
	\begin{enumerate}[label=(\roman*)]
		\item 
		For $1\ll k\ll n^{(\tau-2)/(\tau-1)}$, 
		\begin{equation}
		\frac{a_{\varepsilon_n}(k,G_n)}{n^{(3-\tau)/(\tau-1)}}\dlim \frac{1}{\mu}\left(\frac{2c \Gamma(\tfrac{5}{2}-\tfrac{1}{2}\tau )}{ (\tau-1)(3-\tau)}\cos\left(\frac{\pi(\tau-1)}{4}\right)\right)^{2/(\tau-1)} \mathcal{S}_{(\tau-1)/2},
		\end{equation}
		where $\mathcal{S}_{(\tau-1)/2}$ is a stable random variable.
		\item
		For $n^{(\tau-2)/(\tau-1)} \ll k\ll n^{1/(\tau-1)}$, 
		\begin{equation}\label{cc2}
		\frac{a_{\varepsilon_n}(k,G_n)}{n^{3-\tau}k^{\tau-3}}\plim \frac{c \mu^{2-\tau}}{(3-\tau)(\tau-2)}.
		\end{equation}
	\end{enumerate}
\end{theorem}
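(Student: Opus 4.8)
The plan is to follow the proof of Theorem~\ref{thm:ak} closely, taking advantage of the fact that in the rank-1 inhomogeneous random graph the connection probability $p(h,h')=\min(hh'/(\mu n),1)$ is prescribed exactly, so that no approximation of the type~\eqref{appp} is required. Conditioning on the weights $(h_i)_{i\in[n]}$, I write
\begin{equation*}
a_{\varepsilon_n}(k,G_n)=\frac{1}{k\abs{M_{\varepsilon_n}(k)}}\sum_{v\in M_{\varepsilon_n}(k)}\sum_{i\in[n]}D_i X_{iv},
\end{equation*}
with $X_{iv}$ the edge indicator. Two reductions drive the argument. First, since by construction $\Exp{D_i\mid h_i}=\sum_{j\neq i}p(h_i,h_j)$ is close to $h_i$, I replace each degree $D_i$ by its weight $h_i$ and bound the discrepancy by a second-moment estimate. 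Second, for fixed weights the inner sum $\sum_i h_i X_{iv}$ concentrates on its conditional mean $\sum_i h_i p(h_i,h_v)$; averaging over $v\in M_{\varepsilon_n}(k)$, where $h_v\approx k$, then gives
\begin{equation*}
a_{\varepsilon_n}(k,G_n)\approx \frac{1}{k}\sum_{i\in[n]}h_i\min\!\left(\frac{h_i k}{\mu n},1\right).
\end{equation*}
The hypothesis $n^{-1/(\tau-1)}k\varepsilon_n\to\infty$ ensures that $M_{\varepsilon_n}(k)$ is non-empty and that its size concentrates, so the average over $v$ is meaningful.

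For part~(i), the regime $k\ll n^{(\tau-2)/(\tau-1)}$ forces $h_i k/(\mu n)<1$ for every $i$ with high probability: the maximal weight is $\bigOp{n^{1/(\tau-1)}}$, and $n^{1/(\tau-1)}k/(\mu n)\to 0$ precisely when $k\ll n^{(\tau-2)/(\tau-1)}$. The minimum therefore always takes its linear branch, and the reduced sum collapses to $\frac{1}{\mu n}\sum_{i\in[n]}h_i^2$, the very same expression obtained for the erased configuration model. Since $h_i^2$ lies in the domain of attraction of a stable law of index $(\tau-1)/2$, the Stable Law Central Limit Theorem delivers the limit $\mathcal{S}_{(\tau-1)/2}$ with exactly the normalising constant of Theorem~\ref{thm:ak}(i), and the joint-in-$k$ statement of Remark~\ref{rem:joint} follows verbatim because the leading term is independent of $k$.

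For part~(ii), I approximate the reduced sum by $cnk^{-1}\int_1^\infty x^{1-\tau}\min(xk/(\mu n),1)\,\dd x$ via the weight law~\eqref{D-tail}, splitting the range at the saturation point $x=\mu n/k$. The branch $x<\mu n/k$ contributes $\frac{c\mu^{2-\tau}}{3-\tau}n^{3-\tau}k^{\tau-3}$, the branch $x>\mu n/k$ contributes $\frac{c\mu^{2-\tau}}{\tau-2}n^{3-\tau}k^{\tau-3}$, and summing yields the factor $\frac{1}{3-\tau}+\frac{1}{\tau-2}=\frac{1}{(3-\tau)(\tau-2)}$ of~\eqref{cc2}; here $k\ll n^{1/(\tau-1)}$ guarantees $\mu n/k\gg 1$, so the lower cut-off at $x=1$ is negligible. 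The main obstacle in both parts is to make the two reductions rigorous on the correct scale. In part~(ii) the dominant mass sits at weights of order $n/k$, so I must bound the Bernoulli fluctuations of $\sum_i h_i X_{iv}$ and the degree-weight discrepancy uniformly over that range; in part~(i) these same errors must be shown to be $\op{n^{(3-\tau)/(\tau-1)}}$ against the delicate stable normalisation, which is governed by the few largest weights and is hence most sensitive to the truncation of the heavy tail.
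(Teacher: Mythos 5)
Your proposal is correct and follows essentially the same route as the paper: condition on the weights, replace degrees by weights, reduce $a_{\varepsilon_n}(k,G_n)$ to the deterministic sum $\frac{1}{k}\sum_i h_i\min(h_ik/(\mu n),1)$ via a conditional second-moment/concentration argument (which is clean here because edges are independent given the weights), then apply the stable CLT to $\sum_i h_i^2$ in regime (i) and an integral approximation of the weight sum in regime (ii), where your split at the saturation point $x=\mu n/k$ reproduces the constant $\tfrac{1}{3-\tau}+\tfrac{1}{\tau-2}=\tfrac{1}{(3-\tau)(\tau-2)}$. The only cosmetic difference is that the paper organizes the regime-(ii) concentration through the $\delta$-truncated window $W_n^{k,\sss\mathrm{HVM}}(\delta)$ of weights in $[\delta\mu n/k,\mu n/(\delta k)]$ plus a separate bound on the minor contributions, which is exactly the uniform control over weights of order $n/k$ that you flag as the remaining obstacle.
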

Theorem~\ref{thm:akhvm} is almost identical to Theorem~\ref{thm:ak}. The proof of Theorem~\ref{thm:akhvm} exploits the deep connection between both models, and essentially carries over the results for the erased configuration model to the rank-1 inhomogeneous random graph. The similarity can be understood by noticing that in the erased configuration model the probability that vertices $i$ and $j$ with degrees $D_i$ and $D_j$ are connected can be approximated by~\eqref{appp} which is close to $\min(1,\frac{D_iD_j}{\mu n})$, the connection probability in the rank-1 inhomogeneous random graph. Similar arguments that lead to~\eqref{eq:akheuristic} show that $a_{\varepsilon_n}(k,G_n)$ can be approximated by
\begin{equation}
a_{\varepsilon_n}(k,G_n)\approx k^{-1}\sum_{i\in[n]}h_i\min(h_ik/\mu n,1)\dd x.
\end{equation}
This sum behaves very similarly to the sum in~\eqref{eq:akheuristic}, so that the only difference between Theorem~\ref{thm:ak} and~\ref{thm:akhvm} is the limiting constants in \eqref{cc1} and \eqref{cc2}. The main difference between both models is that in the rank-1 inhomogeneous random graph the presence of all edges is independent as soon as the weights are sampled. This is not true in the erased configuration model, because we know that a vertex with sampled degree $D_i$ cannot have more than $D_i$ neighbors, creating dependence between the presence of edges incident to vertex $i$. We show that these correlations between the presence of different edges in the erased configuration model are small enough for $a(k,G_n)$ to behave similarly in the erased configuration model and the rank-1 inhomogeneous random graph.

The third null model we consider is the hyperbolic random graph. This model was introduced in \cite{krioukov2010} and samples $n$ vertices on a disk of radius $R=2\log(n/\nu)$, where the density of the radial coordinate $r$ a vertex $p=(r,\phi)$ is
	\begin{equation}
		\rho(r)=\alpha\frac{\sinh(\alpha r)}{\cosh(\alpha R)-1}
	\end{equation}
	with $\alpha=(\tau-1)/2$. The angle of $p$ is sampled uniformly from $[0,2\pi]$. Then, two vertices are connected if their hyperbolic distance is at most $R$. The hyperbolic distance of points $u=(r_u,\phi_u)$ and $v=(r_v,\phi_v)$ is defined by
	\begin{equation}\label{eq:dhyp}
		\cosh(\dd (u,v))=\cosh(r_u)\cosh(r_v)-\sinh(r_u)\sinh(r_v)\cos(\theta_{uv}),
	\end{equation}
	where $\theta_{uv}$ denotes the relative angle between $\phi_u$ and $\phi_v$.
	This creates a simple random graph with power-law degrees with exponent $\tau$~\cite{krioukov2010}. The parameter $\nu$ fixes the average degree of the graph. 

	The hyperbolic random graph creates simple sparse random graphs with power-law degrees, but in contrast to the erased configuration model and the rank-1 inhomogeneous random graph, can at the same time
	create many triangles due to its geometric nature~\cite{krioukov2010,candellero2014}. 
	In both the rank-1 inhomogeneous random graph and the erased configuration model, the connection probabilities of different pairs of vertices are (almost) independent. In the hyperbolic random graph, this is not true. When $u$ is connected to $v$ and $u$ is connected to $w$, then $v$ and $w$ should also be close to one another by the triangle inequality. However, if we define 
	\begin{equation}\label{eq:tu}
	t(u)=\me^{(R-r_u)/2}
	\end{equation}
	then we show that we can approximate the probability that two randomly chosen vertices $u$ and $v$ are connected as
	\begin{equation}\label{eq:puvhyper}
	\Prob{X_{uv}=1\mid t(u),t(v)}=\min\Big(\frac{1}{\pi}\cos^{-1}(1-2(\nu t(u)t(v)/n)^2),1\Big),
	\end{equation}
	which behaves similarly as the connection probability in the rank-1 inhomogeneous random graph. Furthermore, by~\cite[Lemma 1.3]{bode2015}, the density of $2\ln(t(u))$ can be written as
	\begin{equation}
		f_{2\ln(t(u))}(x)=\tfrac{\tau-1}{2}\me^{-(\tau-1)x/2}(1+o(1)),
	\end{equation}
	where the $o(1)$ term is with respect to the network size $n$. Therefore, 
	\begin{equation}\label{eq:tudistr}
	\Prob{t(u)>x}=\Prob{2\ln(t(u))>2\ln(x)}=x^{-\tau+1}(1+o(1)),
	\end{equation}
	so that on a high level the hyperbolic random graph can be interpreted as a rank-1 inhomogeneous random graph with $(t(u))_{u\in[n]}$ as weights (see~\cite[Section 1.1.1]{bode2015} for a more elaborate discussion).

	The next theorem shows that indeed the behavior of $a(k,G_n)$ in the hyperbolic random graph is similar as in the rank-1 inhomogeneous random graph:
\begin{theorem}[$a(k,G_n)$ in the hyperbolic random graph]\label{thm:akhrg}
	Let $(G_n)_{n\geq 1}$ be a sequence of hyperbolic random graphs on $n$ vertices with power-law degrees with exponent $\tau$ and parameter $\nu$. Take $\varepsilon_n$  such that $\lim_{n\to\infty}\varepsilon_n=0$ and $\lim_{n\to\infty}n^{-1/(\tau-1)} k \varepsilon_n=\infty$ and let $\Gamma$ denote the Gamma function. 
	\begin{enumerate}[label=(\roman*)]
		\item 
		For $1\ll k\ll n^{(\tau-2)/(\tau-1)}$,
		\begin{equation}
		\frac{a_{\varepsilon_n}(k,G_n)}{n^{(3-\tau)/(\tau-1)}}\dlim\frac{2\nu}{\pi}\left(\frac{2}{3-\tau}\Gamma(\tfrac{5}{2}-\tfrac{1}{2}\tau )\cos\left(\frac{\pi(\tau-1)}{4}\right)\right)^{2/(\tau-1)}\mathcal{S}_{(\tau-1)/2},
		\end{equation}
		where $\mathcal{S}_{(\tau-1)/2}$ is a stable random variable.
		\item
		For $ n^{(\tau-2)/(\tau-1)}\ll k \ll n^{1/(\tau-1)}$, 
		\begin{equation}
		\frac{a_{\varepsilon_n}(k,G_n)}{n^{3-\tau}k^{\tau-3}}\plim\frac{\nu (\tau-1)^2}{(\tau-2)\pi}\left(\frac{\pi}{2\nu}\right)^{2-\tau}\int_{0}^{\infty}x^{1-\tau}\min\left(\frac{1}{\pi}\cos^{-1}(1-2x^2),1\right)\dd x .
		\end{equation}
	\end{enumerate}
\end{theorem}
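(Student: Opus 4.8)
The plan is to treat the hyperbolic random graph as a rank-1 inhomogeneous random graph with weights $(t(u))_{u\in[n]}$, exactly as \eqref{eq:puvhyper} and \eqref{eq:tudistr} suggest, and then to mirror the proof of Theorem~\ref{thm:akhvm} with the connection kernel $\min(h h'/\mu n,1)$ replaced by $p_H(t,t')=\min(\tfrac1\pi\cos^{-1}(1-2(\nu tt'/n)^2),1)$. The first step is to pin down the correspondence between a vertex of degree $k$ and its weight. Averaging $p_H(t,s)$ against the weight density $(\tau-1)s^{-\tau}$ from \eqref{eq:tudistr} shows that a vertex of weight $t$ has expected degree $\sim \tfrac{2\nu(\tau-1)}{\pi(\tau-2)}t$ whenever $t\ll n^{1/(\tau-1)}$, so the event $\{D_v\in[k(1-\varepsilon_n),k(1+\varepsilon_n)]\}$ corresponds to $t(v)$ lying in a shrinking window around some $t_k$ proportional to $k$. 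I would record that this window contains $\Theta(N_k)$ vertices w.h.p.\ using \eqref{eq:tudistr} together with a Poisson-type concentration bound for the degree given the weight; this makes $M_{\varepsilon_n}(k)$ non-empty and $a_{\varepsilon_n}(k,G_n)$ well defined, as in the rank-1 case.

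Fixing $v\in M_{\varepsilon_n}(k)$, I would write $\sum_{j\in\mathcal N_v}D_j=\sum_{j\neq v}D_jX_{jv}$ and analyse its conditional expectation given the weights, so that, using $\Exp{D_j\mid t(j)}\sim \tfrac{2\nu(\tau-1)}{\pi(\tau-2)}t(j)$, the leading term is $\tfrac{2\nu(\tau-1)}{\pi(\tau-2)}\sum_j t(j)\,p_H(t(j),t_k)$ (the edge to $v$ itself contributes a negligible $+1$). For part~(i) the bound $k\ll n^{(\tau-2)/(\tau-1)}$ forces $\nu t(j)t_k/n\to 0$ uniformly up to the maximal weight $\Theta(n^{1/(\tau-1)})$, so I would Taylor-expand $p_H(t,t')\approx 2\nu tt'/(\pi n)$; the factor $t_k$ then cancels the $k^{-1}$ in \eqref{eq:akheuristic}, leaving $\tfrac{2\nu}{\pi n}\sum_j t(j)^2$. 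Since $t(j)^2$ has tail exponent $(\tau-1)/2\in(\tfrac12,1)$ with tail constant $1$, the Stable Law CLT gives convergence of $n^{-2/(\tau-1)}\sum_j t(j)^2$ to a multiple of $\mathcal{S}_{(\tau-1)/2}$, which produces the $n^{(3-\tau)/(\tau-1)}$ scaling, the prefactor $\tfrac{2\nu}{\pi}$, and the stable-law normalisation in the bracket of part~(i). Joint convergence in $k$, as in Remark~\ref{rem:joint}, is immediate because the single sum $\sum_j t(j)^2$ drives every coordinate.

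For part~(ii), with $n^{(\tau-2)/(\tau-1)}\ll k\ll n^{1/(\tau-1)}$, the kernel can no longer be linearised: the dominant neighbours have weight of order $n/k$, for which $\nu t(j)t_k/n=\Theta(1)$ and the $\cos^{-1}$ saturation is genuinely felt. Here I would approximate the sum by $\tfrac{2\nu(\tau-1)}{\pi(\tau-2)}(\tau-1)\,n\int_1^\infty s^{1-\tau}p_H(s,t_k)\,\dd s$, substitute $x=\nu s t_k/n$, and send the lower limit to $0$ (valid since $\nu t_k/n\to 0$). This isolates the scale $n^{3-\tau}k^{\tau-3}$ together with the finite integral $\int_0^\infty x^{1-\tau}\min(\tfrac1\pi\cos^{-1}(1-2x^2),1)\,\dd x$, which is the geometric counterpart of the factor $1/((3-\tau)(\tau-2))$ produced by the rank-1 kernel $\min(x,1)$; tracking the constants through the change of variables yields the stated limit. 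The remaining analytic work is to upgrade this expectation to a $\plim$, i.e.\ to show that $\sum_{j\in\mathcal N_v}D_j$ concentrates around its conditional mean.

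The hard part will be precisely this concentration, since it is where the hyperbolic model departs from the rank-1 model. In the rank-1 graph the indicators $\{X_{jv}\}_j$ are independent given the weights and the degrees $D_j$ are essentially independent of them, so a second-moment bound is routine; in the hyperbolic graph the triangle inequality couples the edges at $v$ and couples them to the neighbours' degrees, because two neighbours of $v$ are forced to be angularly close and hence more likely to be adjacent. I would therefore spend most of the effort bounding the covariances $\mathrm{Cov}(D_iX_{iv},D_jX_{jv})$, showing that these geometric correlations contribute a variance of strictly smaller order than the square of the conditional mean in regime~(ii), so that Chebyshev delivers the $\plim$; in regime~(i) I would instead argue that the correlations do not perturb the stable limit, e.g.\ by coupling to an independent-edge model with kernel $p_H$ and bounding the coupling error on the scale $n^{(3-\tau)/(\tau-1)}$. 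Proving that the angular correlations are asymptotically negligible at the relevant weight scales $n/k$ and $n^{1/(\tau-1)}$ is the crux of the argument.
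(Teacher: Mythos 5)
Your proposal follows essentially the same route as the paper: reduce the hyperbolic graph to a rank-1 model with weights $t(u)$ via the kernel $\min\bigl(\tfrac1\pi\cos^{-1}(1-2(\nu t(u)t(v)/n)^2),1\bigr)$ and the degree--weight concentration $D_u=\tfrac{\nu(\tau-1)}{\pi(\tau-2)}t(u)(1+\op(1))$, then Taylor expansion plus the stable CLT for $k\ll n^{(\tau-2)/(\tau-1)}$, and a sum-to-integral argument over a $\delta$-truncated major-contribution window $t(u)\in[\delta\zeta n/k,\zeta n/(\delta k)]$ plus a second-moment bound for larger $k$. The one place where you diverge is the step you single out as the crux: the covariances between edge indicators induced by the geometry. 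Once the degrees are replaced by multiples of the weights, the edge indicators are the only remaining randomness, and conditionally on the radial coordinates any two \emph{distinct} edges are pairwise independent: by rotational symmetry $\Probn{X_{iv}=1\mid \phi_v}$ does not depend on $\phi_v$, and the relative angles $\theta_{iv}$ and $\theta_{jv}$ are independent given $\phi_v$. The triangle-inequality correlations you describe are real, but they are three-point correlations (they matter for $c(k)$, not for $a(k)$); for the pairs of edges entering $\Varn{a_{\varepsilon_n}(k,W_n^k(\delta))}$ they contribute nothing, so the variance step is actually \emph{easier} here than in the erased configuration model, where distinct edges genuinely are dependent. Your planned covariance estimates would simply return zero, so there is no gap --- only effort you can save; the paper dispatches the concentration in one line by this conditional independence and otherwise proceeds exactly as you propose.
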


\subsection{Discussion}\label{sub3}

\paragraph{Universality.} 
The behavior of $a(k)$ is universal across the three null models we consider. The erased configuration model and the rank-1 inhomogeneous random graph are closely related. They are known to behave similarly for example under critical percolation~\cite{bhamidi2014,bhamidi2017}, in terms of distances~\cite{esker2008} when $\tau>3$, and in terms of clustering when $\tau\in(2,3)$~\cite{stegehuis2017}. The hyperbolic random graph typically shows different behavior, for example in terms of clustering~\cite{gugelmann2012,candellero2014}, or connectivity~\cite{bode2015,bode2016}. Still, the behavior of $a(k)$ is similar in the hyperbolic random graph and the other two null models.
In all three null models, the main contribution to $a(k)$ for $k\gg n^{(\tau-2)/(\tau-1)}$ comes from vertices with degrees proportional to $n/k$ (see Propositions~\ref{prop:minor} and~\ref{prop:akmajor}). In the hyperbolic random graph, we can relate this maximum contribution to the geometry of the hyperbolic sphere. A vertex $i$ of degree $k$ has radius $r_i\approx R-2\log(k)$. Similarly, a vertex $j$ of degree $ n/(\nu k)$ has radius $r_j\approx R-2\log( n/(k\nu))=2\log(k)$. Then, $r_j\approx R-r_i$, so that the major contributing vertices have radial coordinate proportional to $R-r_i$.

\paragraph{Expected average nearest neighbor degree.}
In Theorems~\ref{thm:ak}-\ref{thm:akhrg} we show that $a(k,G_n)$ converges in probability to a stable random variable when $k$ is small. Thus, when we generate many samples of random graphs, we will see that for fixed $k$, the distribution of the values of $a(k,G_n)$ across the different samples will look like a stable random variable. We can also study the expected value of $a(k,G_n)$ across the different samples. For the erased configuration model for example, using similar techniques as in the proof of Theorem~\ref{thm:ak}(ii), we can show that (see Section~\ref{sec:akexp})
\begin{equation}\label{eq:expak}
\lim_{n\to\infty}\frac{\Exp{a(k,G_n)}}{(n/k)^{3-\tau}}=-c\mu^{2-\tau}\Gamma(2-\tau).
\end{equation}
 The difference between the scaling of the expected value of $a(k,G_n)$ and the typical behavior of $a(k,G_n)$ in Theorem~\ref{thm:ak}(i) is caused by high-degree vertices. In typical degree sequences, the maximum degree is proportional to $n^{1/(\tau-1)}$. It is unlikely that vertices with higher degrees are present, but if they are, they have a high impact on the average nearest neighbor degree of low degree vertices, causing the difference between the expected average nearest neighbor degree and the typical average nearest neighbor degree. Thus, the expected value of $a(k,G_n)$ is not very informative when $k$ is small, since Theorem~\ref{thm:ak} shows that $a(k,G_n)$ will almost always be smaller than its expected value when $k$ is small. Also note that including $\varepsilon_n$ in~\eqref{eq:expak} is not necessary, since the expected value is not affected by the event that no vertex of degree $k$ is present.

Figure~\ref{fig:ak} illustrates this difference in terms of the mean and median value of $a(k,G_n)$ over many realizations of the erased configuration model, the rank-1 inhomogeneous random graph and the hyperbolic random graph. Here indeed we see that the expected average neighbor degree is scales as a power of $k$ over the entire range of $k$, where the median shows the straight part of the curve from Theorem~\ref{thm:ak}. Thus, it is important to distinguish between mean and median of $a(k,G_n)$ when simulating random graphs.
\begin{figure}[tb]
	\centering
	\begin{subfigure}[t]{0.32\textwidth}
		\centering
		\includegraphics[width=\linewidth]{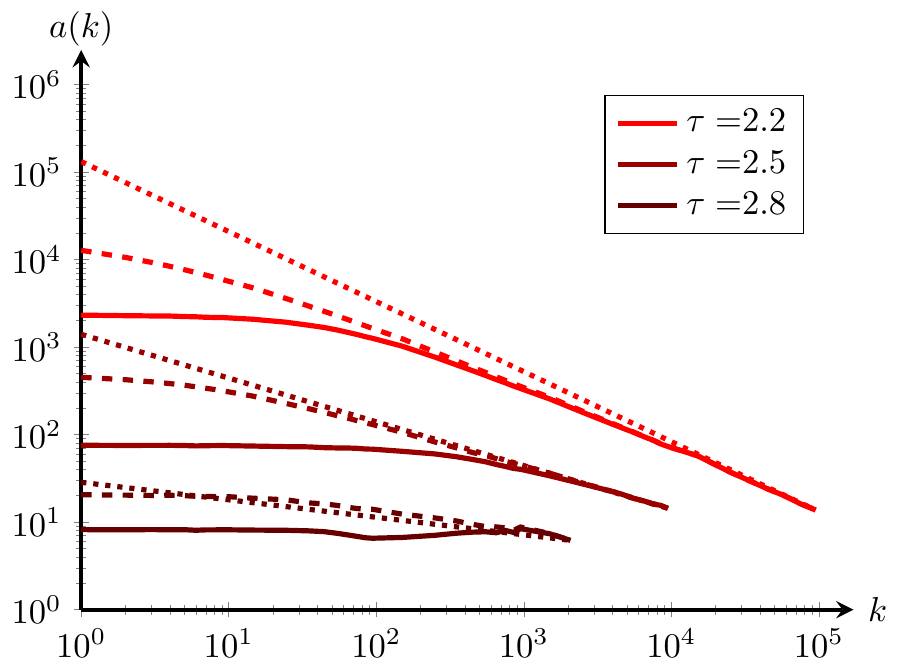}
		\caption{erased configuration model}
		\label{fig:akECM}
	\end{subfigure}
	\begin{subfigure}[t]{0.32\textwidth}
		\centering
	\includegraphics[width=\linewidth]{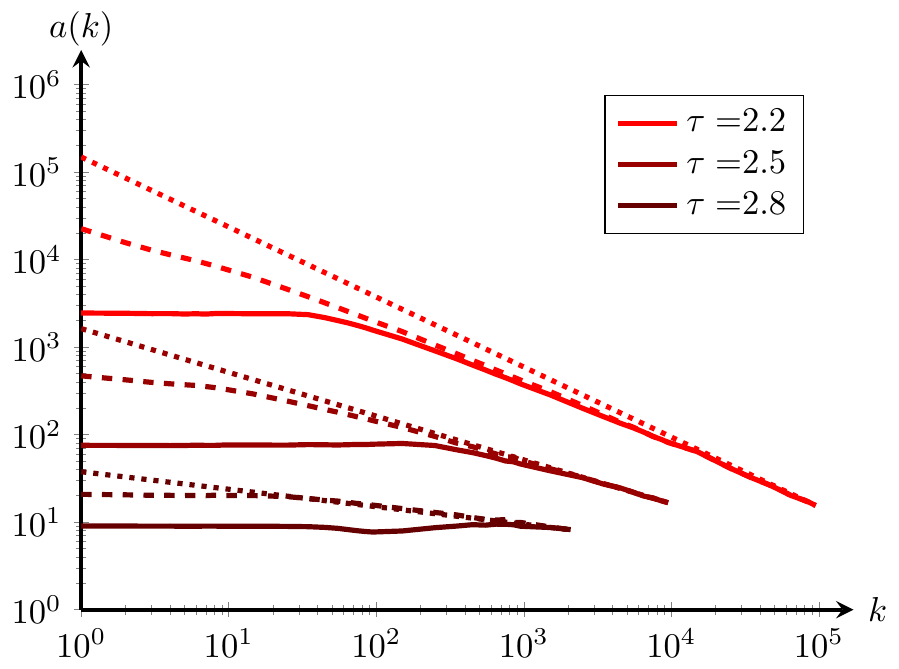}
	\caption{inhomogeneous random graph}
	\label{fig:akhidden}
	\end{subfigure}
\begin{subfigure}[t]{0.32\textwidth}
	\centering
	\includegraphics[width=\linewidth]{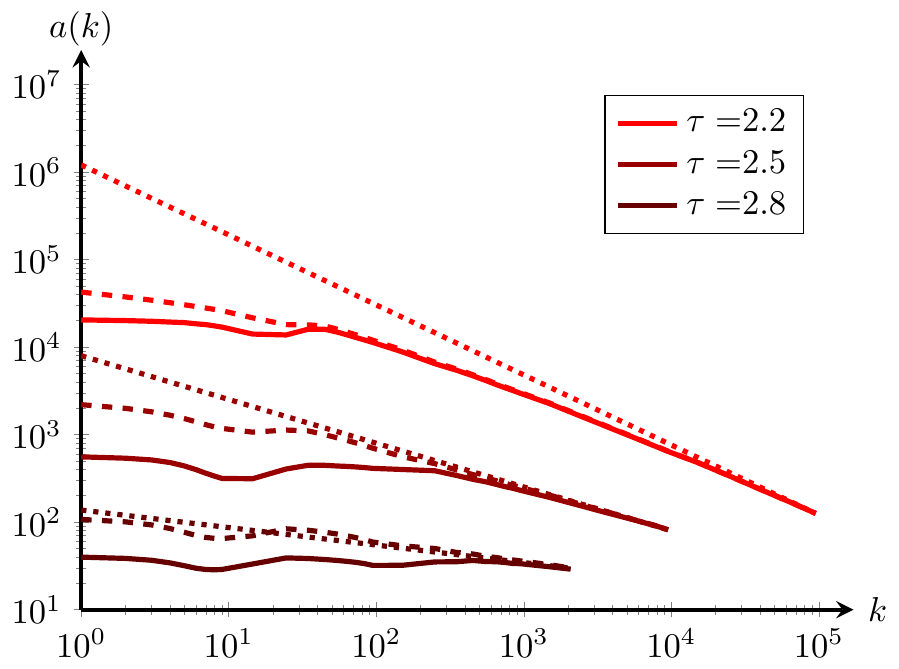}
	\caption{hyperbolic random graph}
	\label{fig:akhyper}
\end{subfigure}
	\caption{$a(k,G_n)$ for different random graph models with $n=10^6$. The solid line is the median of $a(k,G_n)$ over $10^4$ realizations of the random graph, and the dashed line is the average over these realizations. The dotted line is the asymptotic slope $k^{\tau-3}$.}
	\label{fig:ak}
\end{figure}

\paragraph{Vertices of degree $k$}
The definition~\eqref{eq:ak} assumes that a vertex of degree $k$ is present. For large values of $k$, this is a rare event, by~\eqref{D-tail}. Indeed, vertices of degree at most $n^{1/\tau}$ are present with high probability in the erased configuration model, whereas the probability that a vertex of degree $k\gg n^{1/\tau}$ is present tends to zero in the large network limit~\cite{yao2017}. We avoid this problem by averaging $a(k,G_n)$ over a small range of degrees. Another option is to condition on the event that a vertex of degree $k$ is present. Our proofs for $k\ll n^{(\tau-2)/(\tau-1)}$ for the erased configuration model can easily be adjusted to condition on this event. For $k$ larger, we leave the behavior of $a(k,G_n)$ conditionally on a vertex of degree $k$ being present open for further research.

\paragraph{Fixed degrees.}
In the proof of Theorem~\ref{thm:ak} we show that the fluctuations that come with the stable laws for small $k$ are not present when we condition on the degree sequence. Thus, the large fluctuations in $a(k,G_n)$ for small $k$ are only caused by fluctuations of the i.i.d.\ degrees, weights or radii. For a given real-world network, the network degrees are often preserved, and many samples of erased configuration models or inhomogeneous random graphs are created with the observed degree sequence. In this fixed-degree setting, the sample-to-sample fluctuations of $a (k)$ are relatively small.

\paragraph{Relation with local clustering.}
The local clustering coefficient $c(k)$ of vertices of degree $k$ measures the probability that two randomly chosen neighbors af a randomly chosen vertex of degree $k$ are connected. In many real-world networks as well as simple null models, $c(k)$ decreases as a function of $k$~\cite{vazquez2002,boguna2003,ravasz2003,stegehuis2017,hofstad2017b}. The relation between the decay rate of $c(k)$ and the decay rate of $a(k)$ has been investigated for the rank-1 inhomogeneous random graph, where it was shown that $c(k)<a(k)/k$~\cite{serrano2006}. 
Using recent results for $c(k)$ on the erased configuration model and the rank-1 inhomogeneous random graph, we can make the relation between $c(k)$ and $a(k)$ more precise. When $k\gg\sqrt{n}$, $c(k)$ in the erased configuration model satisfies~\cite{hofstad2017b}
\begin{equation}
c(k)=c^2\Gamma(2-\tau)^2\mu^{3-2\tau}n^{5-2\tau}k^{2\tau-6}(1+\op(1)).
\end{equation}
Then, by Theorem~\ref{thm:ak}, when $k\gg \sqrt{n}$,
\begin{equation}\label{eq:ckak}
c(k)=\frac{a^2(k)}{\mu n}(1+\op(1)).
\end{equation}
Intuitively, we can see this relationship in the following way. Pick two neighbors of a vertex with degree $k$. By definition, these vertices have degree $a(k)$ on average. Since $k\gg\sqrt{n}$, by Theorem~\ref{thm:akhvm} $a(k)\ll\sqrt{n}$. Therefore, the probability of two vertices with weight $a(k)$ to be connected is approximately $1-\me^{-a(k)^2/\mu n}\approx a(k)^2/\mu n$. Since the clustering coefficient can be interpreted as the probability that two randomly chosen neighbors are connected, the clustering coefficient should satisfy $c(k)\sim a(k)^2/\mu n$ when $k\gg \sqrt{n}$. In particular, the decay of the clustering coefficient should be twice as fast as the decay of the average neighbor degree. Analytical results on $c(k)$ on the rank-1 inhomogeneous random graph show that~\eqref{eq:ckak} is also the correct relation between clustering and degree correlations in the rank-1 inhomogeneous random graph~\cite{stegehuis2017}. Future research might explore the relation between $c(k)$ and $a(k)$ in other null models, such as the hyperbolic random graph or the preferential attachment model. It would also be interesting to see if the difference between expectation and typical behavior that is present in $a(k)$ also occurs for the local clustering coefficient $c(k)$.
\begin{figure}[tb]
	\centering
	\includegraphics[width=0.3\linewidth]{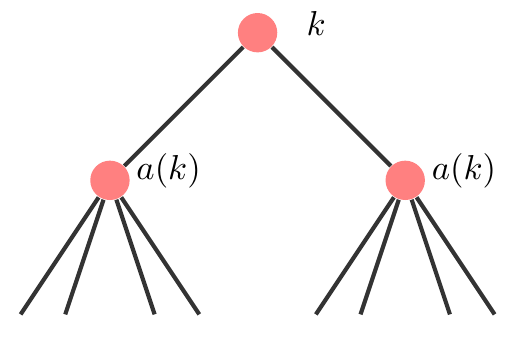}
	\caption{The neighbors of a vertex of degree $k$ have average degree $a(k)$}
	\label{fig:trianga}
\end{figure}

\paragraph{Correlations in the hyperbolic random graph.}
The relation between $a(k)$ and $c(k)$ in the rank-1 inhomogeneous random graph and the erased configuration model is based on the fact that in these two models, the connection probabilities of pairs of vertices $(i,j)$, $(i,k)$ and $(j,k)$ are (almost) independent. In the hyperbolic random graph, the geometry causes a strong dependence between these connection probabilities. If vertices $j$ and $k$ are neighbors of $i$, they are likely to be geometrically close to one another due to the triangle inequality. This makes the probability that $j$ and $k$ are connected larger than in the rank-1 inhomogeneous random graph or the erased configuration model. These correlations do not play a role when computing $a(k)$, since $a(k)$ only involves the connection probability of two different vertices. When computing statistics of the hyperbolic random graph that include three-point correlations, the equivalence between the hyperbolic random graph and the rank-1 inhomogeneous random graph may fail to hold, as in the example of $c(k)$. 

Interestingly, the number of cliques was also shown to be similar in the hyperbolic random graph, the rank-1 inhomogeneous random graph and the erased configuration model~\cite{friedrich2015}, even though cliques clearly involve three-point correlations. 
Cliques in the hyperbolic random graph are typically formed between vertices at radius proportional to $R/2$~\cite{friedrich2015}, so that their degrees are proportional to $\sqrt{n}$~\cite{bode2015}. These vertices form a dense core, which is very similar to what happens in the erased configuration model and the rank-1 inhomogeneous random graph~\cite{janson2006}. In the erased configuration model, many other small subgraphs typically occur between vertices of degrees proportional to $\sqrt{n}$~\cite{hofstad2017d}. It would be interesting to see if the number of these small subgraphs behaves similarly in the hyperbolic random graph.


	\section{Average nearest neighbor degree in the ECM}\label{sec:akecm}
	In this section, we prove Theorem~\ref{thm:ak}. For $k=o(n^{\frac{\tau-2}{\tau-1}})$, we couple the degrees of neighbors of a uniformly chosen vertex of degree $k$ to i.i.d.\ samples of the size-biased degree distribution in Section~\ref{sec:ksmall}. When $k\gg n^{\frac{\tau-2}{\tau-1}}$, this coupling is no longer valid. We then show in Section~\ref{sec:klarge} that a specific range of degrees contributes most to $a_{\varepsilon_n}(k,G_n)$. 
	
	\subsection{Preliminaries}
	 We say that $X_n=\bigOp{b_n}$ for a sequence of random variables $(X_n)_{n\geq 1}$ if $|X_n|/b_n$ is a tight sequence of random variables, and $X_n=\op(b_n)$ if $X_n/b_n\plim 0$. We often want to interchange the sampled degree of a vertex $i$, $D_i$ and its erased degree $\Der_i$. By~\cite[Eq. A(9)]{britton2006}
	\begin{equation}\label{eq:DerD}
		\Der_i=D_i(1+\op(1)),
	\end{equation}
	when $D_i=o(n)$. Let $L_n$ denote the total number of half-edges, so that $L_n=\sum_i D_i$. 
	We define the events
	\begin{equation}\label{eq:jn}
	J_n=\{\abs{L_n-\mu n}\leq n^{2/\tau}\} \quad A_n=\{|M_{\varepsilon_n}|\geq 1\}.
	\end{equation}
	By~\cite[Lemma 2.3]{hofstad2017}, $\Prob{J_n}\to 1$ as $n\to\infty$. By~\cite[Theorem 2.1]{britton2006}
	\begin{equation}
	|M_{\varepsilon_n}(k)|=c n\int_{k(1-\varepsilon_n)}^{k(1+\varepsilon_n)}x^{-\tau}\dd x(1+\op(1)) =\tilde{C}n^{-1}k^{1-\tau}\varepsilon_n(1+\op(1)),
	\end{equation}
	for some $\tilde{C}>0$, so that $\Prob{A_n}\to 1$ for $k\ll n^{1/(\tau-1)}$ by the choice of $\varepsilon_n$ in Theorem~\ref{thm:ak}. 
	
	In the rest of this section, we will often condition on the degree sequence. For some event $\mathcal{E}$, we use the notation $\Probn{\mathcal{E}}=\Prob{\mathcal{E}\mid (D_i)_{i\in[n]}}$, and we define $\expec_n$ and $\text{Var}_n$ similarly.

	\subsection{Small $k$: Coupling to i.i.d.\ random variables}\label{sec:ksmall}
	In this section we investigate the behavior of $a_{\varepsilon_n}(k,G_n)$ when $k=o( n^{(\tau-2)/(\tau-1)})$. We first pick a random vertex $v$ of degree $k$. We couple the degrees of the neighbors of $v$ to i.i.d.\ copies of the size-biased degree distribution $D_n^*$, where
	\begin{equation}
	\Probn{D_n^*=k}=\frac{k}{L_n}\sum_{i\in[n]}\ind{D_i=k}.
	\end{equation}
	 We then use this coupling to compute $a_{\varepsilon_n} (k,G_n)$. 
	
\begin{proof}[Proof of Theorem~\ref{thm:ak}(i)]
	We first condition on the degree sequence $(D_i)_{i\in[n]}$. Let $v$ be a vertex of degree $k$. 
	In the erased configuration model, neighbors of $v$ are constructed by pairing the half-edges of $v$ uniformly to other half-edges. The distribution of the degree of a vertex attached to a uniformly chosen half-edge is given by $D_n^*$. However, the degrees of the neighbors of $v$ are not an i.i.d.\ sample of $D_n^*$ due to the fact that the half-edges should attach to distinct vertices, because the neighbors of $v$ should be distinct vertices. We now show that we can still approximate the degrees of the neighbors of $v$ by an i.i.d.\ sample of $D_n^*$ by using a coupling argument. Denote the degrees of the neighbors of $v$ by $B_1,\dots, B_k$, in the order in which we encounter them. Let $Y_1,\dots, Y_k$ be i.i.d.\ samples of $D_n^*$. These samples can be obtained by sampling uniform half-edges with replacement and setting $Y_i=D_{v'_i}$, where $v'_i$ denotes the vertex incident to the $i$th drawn half-edge.
	We use a similar coupling as in~\cite[Construction 4.2]{bhamidi2012a} to couple the $B_i$ to $Y_i$. Let $(v_i')_{i\in[k]}$ denote vertices attached to $k$ uniformly chosen half-edges (with replacement) and set $V_0=v$.
	 Then for $i\in[k]$ the coupling is defined in the following way:
	\begin{itemize}
		\item
	If $v'_i\notin V_{i-1}$, then $B_i=Y_i$ and $v_i=v'_i$. Set $V_i=V_{i-1}\cup v_i'$. We say that $B_i$ and $Y_i$ are successfully coupled.
	\item 
	If $v'_i \in V_{i-1}$, we redraw a uniformly chosen half-edge from the set of half-edges not incident to $ V_{i-1}$. Let $v_i$ denote the vertex incident to the chosen half-edge. Set $B_i=D_{v_i}$ and $V_i=V_{i-1}\cup v_i$. We then say that $B_i$ and $Y_i$ are miscoupled.
	\end{itemize}
Thus, informally, we select $k$ uniformly chosen half-edges, and look at the vertices they point to. If these vertices are all distinct, we have successfully coupled the neighbors of $v$ to an i.i.d.\ sample of $D_n^*$. If not, we need to redraw some of these half-edges to ensure that all neighbors of $v$ are distinct. We now show that the coupling is successful with high probability. 
	By~\cite[Lemma 4.3]{bhamidi2012a}, the probability of a miscoupling at step $i$ can be bounded as
	\begin{equation}
		\Probn{B_i\neq Y_i\mid \mathcal{F}_{i-1}}\leq {L_n}^{-1}\Big(k+\sum_{s=1}^{i-1}B_s\Big),
	\end{equation}
	where $\mathcal{F}_{i}=\sigma(B_j,Y_j)_{j\in[i]}$ denotes the sigma-algebra containing all information about the $Y_j$ and $B_j$ variables encountered up to step $i$.
	Thus, the expected number of miscouplings up to time $t$, $N_{\text{mis}}(t)$, satisfies
	\begin{equation}
		\Expn{N_{\text{mis}}(t)}\leq \frac{kt}{L_n}+\frac{1}{L_n}\sum_{i=1}^t\sum_{s=1}^{i-1}\Expn{B_s}.
	\end{equation}
	When $B_s$ is successfully coupled, $\Expn{B_s\mid \text{succesfully coupled}}=\Expn{D_n^*}=\sum_i D_i^2/L_n$.
	When $B_s$ is not successfully coupled, it is drawn in a size-biased manner from the vertices that are not chosen yet. Then
	\begin{equation}
		\begin{aligned}[b]
		\Expn{B_s\mid \mathcal{F}_{i-1},\text{ miscoupled}} & =\frac{\sum_{i\notin V_s}D_i^2}{\sum_{i\notin V_s}D_i}\leq \frac{\sum_{i\in[n]}D_i^2}{\sum_{i\in [n]}D_i-\sum_{i\in V_s}D_i}\\
		& =\frac{\sum_{i\in [n]} D_i^2}{\sum_{i\in [n]}D_i}\left(1+\frac{\sum_{i\in V_s}D_i}{\sum_{i\in [n]}D_i-\sum_{i\in V_s}D_i}\right).
		\end{aligned}
	\end{equation}
	Since $D_{\max}=\bigOp{n^{1/(\tau-1)}}$, $\sum_{i\in V_s}D_i=\bigOp{sn^{1/(\tau-1)}}$ for all possible $\mathcal{F}_{i-1}$. 
	For $t$ large, we obtain from~\eqref{D-tail} that
	\begin{equation}\label{eq:Dsq}
	\Prob{D^2>t}=\Prob{D>\sqrt{t}}=\frac{c}{\tau-1}t^{(1-\tau)/2}(1+o(1)).
	\end{equation}
	Using~\eqref{eq:Dsq} we can use the Stable Law Central Limit Theorem (see for example~\cite[Theorem~4.5.2]{whitt2006}) to conclude that
	\begin{equation}\label{eq:stableDsq}
	\frac{\sum_{i\in [n]}D_i^2}{n^{2/(\tau-1)}\left(\frac{2c}{ (\tau-1)(3-\tau)} \Gamma(\tfrac{5}{2}-\tfrac{1}{2}\tau )\cos\left(\frac{\pi(\tau-1)}{4}\right)\right)^{2/(\tau-1)} }\dlim  \mathcal{S}_{(\tau-1)/2},
	\end{equation}
		where $\mathcal{S}_{(\tau-1)/2}$ is a stable random variable.
	Thus, as long as $s=o(n^{(\tau-2)/(\tau-1)})$,
	\begin{equation}\label{eq:EBs}
		\Expn{B_s}=O_{\sss\pprob}\Big(L_n^{-1}{\sum_{i\in[n]}D_i^2}\Big)=\bigOp{n^{(3-\tau)/(\tau-1)}}.
	\end{equation}
	Then, for $k=o(n^{(\tau-2)/(\tau-1)})$
	\begin{equation}
		\Expn{N_{\text{mis}}(k)}=\frac{k^2}{L_n}+\frac{1}{L_n}\bigOp{n^{(3-\tau)/(\tau-1)}}\sum_{i=1}^k (i-1)=\bigOp{k^2n^{2\frac{2-\tau}{\tau-1}}}.
	\end{equation}
	Thus, as long as $k=o(n^{\frac{\tau-2}{\tau-1}})$, 
	\begin{equation}
		\Expn{N_{\text{mis}}(k)}=\op(1).
	\end{equation}
	Then, by the Markov inequality
	\begin{equation}
		\Probn{N_\text{mis}(k)=0}=1-\Probn{N_{\text{mis}}(k)\geq 1}\geq 1-\Expn{N_{\text{mis}}(k)}=1-\op(1).
	\end{equation}
	Thus, when $k=o(n^{(\tau-2/(\tau-1))})$, we can approximate the sum of the degrees of the neighbors of a vertex with degree $k$ by i.i.d.\ samples of the size-biased degree distribution. 
	Because $\Der_i=D_i(1+\op(1))$, conditionally on the degree sequence
	\begin{equation*}
		a_{\varepsilon_n}(k,G_n)=\frac{1}{k |M_{\varepsilon_n}(k)|}\sum_{i: \in M_{\varepsilon_n}(k)}\sum_{j\in\mathcal{N}_i}\Der_j=\frac{1}{k}\expec_n\Big[\sum_{j\in \mathcal{N}_{V_k}}\Der_j\Big]=(1+\op(1))\Expn{D_{\mathcal{N}_{V_k}(U)}},
	\end{equation*}
	where $V_k$ denotes a uniformly chosen vertex in $M_{\varepsilon_n}(k)$, and $\mathcal{N}_{V_k}(U)$ is a uniformly chosen neighbor of vertex $V_k$. Here the second equality holds because the average nearest neighbor degree averages over all neighbors of vertex $j$, and the third equality holds because it also averages over all vertices in $M_{\varepsilon_n}(k)$, together with the fact that $D_{V_k}=k(1+o(1))$ and $\Der_i=D_i(1+\op(1))$ uniformly in $i$. With high probability, we can couple the degrees of neighbors of a uniformly chosen vertex of degree in $[k(1-\varepsilon_n),k(1+\varepsilon_n)]$ to i.i.d\ copies of $D_n^*$. Then, conditionally on the degree sequence,
	\begin{equation}\label{eq:akDsize}
		a_{\varepsilon_n} (k,G_n)=(1+\op(1))\Expn{D_n^*}=(1+\op(1))L_n^{-1}\sum_{i\in [n]}D_i^2.
	\end{equation}
	Note that this expression is independent of $k$. Combining this with~\eqref{eq:stableDsq} results in
	\begin{equation}
		\frac{a_{\varepsilon_n} (k,G_n)}{n^{(3-\tau)/(\tau-1)}}\dlim  \frac{1}{\mu }\left(\frac{2c}{ (\tau-1)(3-\tau)} \Gamma(\tfrac{5}{2}-\tfrac{1}{2}\tau )\cos\left(\frac{\pi(\tau-1)}{4}\right)\right)^{2/(\tau-1)} \mathcal{S}_{(\tau-1)/2}.
	\end{equation}
   The fact that~\eqref{eq:akDsize} is independent of $k$ proves the joint convergence of Remark~\ref{rem:joint}.
	\end{proof}
	
	\subsection{Large $k$}\label{sec:klarge}
	Now we study the value of $a_{\varepsilon_n}(k,G_n)$ when $k\gg n^{(\tau-2)/(\tau-1)}$. We show that there exists a range of degrees $W_n^k(\delta)$ which gives the largest contribution to $a_{\varepsilon_n}(k,G_n)$. For ease of notation, we write $a_{\varepsilon_n}(k)$ for $a_{\varepsilon_n}(k,G_n)$ in this section. We define
	\begin{equation}\label{eq:wkn}
	W_n^k(\delta)=\left\{u:D_u\in [\delta n/k,n/(\delta k)]\right\},
	\end{equation}
	and we write
	\begin{equation}
	\begin{aligned}[b]
	a_{\varepsilon_n}(k) &= \frac{1}{k \abs{M_{\varepsilon_n}(k)}}\sum_{i: \in M_{\varepsilon_n}(k)}\sum_{j\in W_n^k(\delta)}\Der_j+ \frac{1}{k \abs{M_{\varepsilon_n}(k)}}\sum_{i: \in M_{\varepsilon_n}(k)}\sum_{j\notin W_n^k(\delta)}\Der_j\\
	&= : a_{\varepsilon_n}(k,W_n^k(\delta))+a_{\varepsilon_n}(k,\bar{W}_n^k(\delta)),
	\end{aligned}
	\end{equation}
	where $a_{\varepsilon_n}(k,W_n^k(\delta))$ denotes the contribution to $a_{\varepsilon_n} (k)$ from vertices in $W_n^k(\delta)$, and $a_{\varepsilon_n}(k,\bar{W}_n(\varepsilon))$ the contribution from vertices not in $W_n^k(\delta)$. In the rest of this section, we prove the following two propositions, which together show that the largest contribution to $a_{\varepsilon_n}(k)$ indeed comes from vertices in $W_n^k(\delta)$.
	
	\begin{proposition}[Minor contributions]\label{prop:minor}
		There exists $\kappa>0$ such that for $k\gg n^{(\tau-2)/(\tau-1)}$,
		\begin{equation}
		\limsup_{n\to\infty}\frac{\Exp{a_{\varepsilon_n}(k,\bar{W}_n^k(\delta))}}{(n/k)^{3-\tau}} =\bigO{\delta^\kappa}.
		\end{equation}
	\end{proposition}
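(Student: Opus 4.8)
The plan is to pass to the conditional expectation given the degrees, bound each edge probability by a first–moment estimate, and reduce the claim to two truncated moments of the degree law, each of which is a positive power of $\delta$ times $(n/k)^{3-\tau}$. First I would condition on $(D_i)_{i\in[n]}$. Erasing only deletes edges, so $\Der_j\le D_j$, and $\{i\sim j\}$ holds in the erased graph exactly when $i$ and $j$ share at least one edge in the configuration model; a union bound over the $D_iD_j$ pairs of half-edges, each matched with probability $1/(L_n-1)$, gives $\Probn{i\sim j}\le\min(1,D_iD_j/(L_n-1))$, in line with~\eqref{appp}. Hence
\begin{equation*}
\Expn{a_{\varepsilon_n}(k,\bar W_n^k(\delta))}\le\frac{1}{k\abs{M_{\varepsilon_n}(k)}}\sum_{i\in M_{\varepsilon_n}(k)}\sum_{j\notin W_n^k(\delta)}D_j\min\Big(1,\frac{D_iD_j}{L_n-1}\Big).
\end{equation*}
Since every $i\in M_{\varepsilon_n}(k)$ has $D_i\le k(1+\varepsilon_n)$ and $\min(1,\cdot)$ is increasing, the inner sum is at most a quantity independent of $i$, so the random factor $\abs{M_{\varepsilon_n}(k)}$ cancels; writing $j\notin W_n^k(\delta)$ as $D_j<\delta n/k$ or $D_j>n/(\delta k)$ and bounding the minimum by its linear term on the first range and by $1$ on the second leaves
\begin{equation*}
\Expn{a_{\varepsilon_n}(k,\bar W_n^k(\delta))}\le\frac{1+\varepsilon_n}{L_n-1}\sum_{D_j<\delta n/k}D_j^2+\frac{1}{k}\sum_{D_j>n/(\delta k)}D_j.
\end{equation*}

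The decisive point is that neither sum needs any concentration of $L_n$: the high-degree sum no longer contains $L_n$, and in the low-degree sum the deterministic bound $L_n\ge n$ turns $1/(L_n-1)$ into $1/(n-1)$. Both sums are then sums over the individual i.i.d.\ degrees, so taking the full expectation by the tower property gives
\begin{equation*}
\Exp{a_{\varepsilon_n}(k,\bar W_n^k(\delta))}\le\frac{(1+\varepsilon_n)n}{n-1}\Exp{D^2\ind{D<\delta n/k}}+\frac{n}{k}\Exp{D\ind{D>n/(\delta k)}}.
\end{equation*}
Since $\tau\in(2,3)$, the first truncated moment is dominated by degrees of order $\delta n/k$ and the second by degrees of order $n/(\delta k)$, both diverging with $n$ for fixed $\delta$, so~\eqref{D-tail} applies and yields $\Exp{D^2\ind{D<T}}=\tfrac{c}{3-\tau}T^{3-\tau}(1+o(1))$ and $\Exp{D\ind{D>T}}=\tfrac{c}{\tau-2}T^{2-\tau}(1+o(1))$. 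Substituting $T=\delta n/k$ and $T=n/(\delta k)$ and letting $n\to\infty$ gives
\begin{equation*}
\limsup_{n\to\infty}\frac{\Exp{a_{\varepsilon_n}(k,\bar W_n^k(\delta))}}{(n/k)^{3-\tau}}\le\frac{c}{3-\tau}\delta^{3-\tau}+\frac{c}{\tau-2}\delta^{\tau-2}=\bigO{\delta^\kappa}
\end{equation*}
with $\kappa=\min(3-\tau,\tau-2)>0$, which is the claim.

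The main difficulty is structural rather than computational. The tempting shortcut of bounding $a_{\varepsilon_n}$ crudely by $L_n/k$ on the rare event $J_n^c=\{\abs{L_n-\mu n}>n^{2/\tau}\}$ and using $\Prob{J_n}\to1$ does not work, because $\Exp{L_n\ind{J_n^c}}$ is governed by a single atypically large degree and, even after dividing by $k$, is not negligible against $(n/k)^{3-\tau}$ when $k$ is close to the threshold $n^{(\tau-2)/(\tau-1)}$. The whole point of keeping the minimum intact and splitting at $\delta n/k$ and $n/(\delta k)$ is that the high-degree tail is then controlled with no reference to $L_n$ and the low-degree tail only through the deterministic bound $L_n\ge n$, so that the heavy-tailed fluctuations of $L_n$ never have to be estimated. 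A matching lower bound of the same order (needed for the companion Proposition~\ref{prop:akmajor}) is not required here, since for the minor contribution $\Der_j\le D_j$ already suffices.
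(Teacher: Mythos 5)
Your proof is correct and follows essentially the same route as the paper: bound $\Probn{X_{ij}=1}\leq\min(1,D_iD_j/L_n)$, split $\bar W_n^k(\delta)$ at $\delta n/k$ and $n/(\delta k)$, use the linear bound below and the constant bound above, and evaluate the two truncated moments to get $\delta^{3-\tau}$ and $\delta^{\tau-2}$ with $\kappa=\min(3-\tau,\tau-2)$. Your handling of the randomness of $L_n$ (via the deterministic bound $L_n\geq n$ for the low-degree part and its absence from the high-degree part) is slightly more careful than the paper's direct substitution of $\mu n$, but it is a refinement of the same argument, not a different one.
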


	\begin{proposition}[Major contributions]\label{prop:akmajor}
		\begin{equation}
		\frac{a_{\varepsilon_n}(k,W_n^k(\delta))}{(n/k)^{3-\tau}}\plim c\mu^{2-\tau}\int_{\delta}^{1/\delta}x^{1-\tau}(1-\me^{-x})\dd x
		\end{equation}
	\end{proposition}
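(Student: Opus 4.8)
\textbf{Proof proposal for Proposition~\ref{prop:akmajor}.}

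The plan is to compute the limit of $a_{\varepsilon_n}(k,W_n^k(\delta))$ by conditioning on the degree sequence, identifying the conditional expectation as a Riemann-sum approximation to the stated integral, and then controlling the conditional variance to upgrade the expectation statement to convergence in probability. Throughout I work on the high-probability events $J_n$ and $A_n$ from~\eqref{eq:jn}, so that $L_n = \mu n(1+\op(1))$ and $M_{\varepsilon_n}(k)$ is non-empty.

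First I would write out the conditional expectation. Using the connection-probability approximation~\eqref{appp} together with $\Der_j = D_j(1+\op(1))$, for a uniformly chosen $v\in M_{\varepsilon_n}(k)$ with $D_v = k(1+o(1))$,
\begin{equation}
\Expn{a_{\varepsilon_n}(k,W_n^k(\delta))} = (1+\op(1))\frac{1}{k}\sum_{j\in W_n^k(\delta)} D_j\big(1-\me^{-D_j k/(\mu n)}\big).
\end{equation}
The sum ranges only over vertices with $D_j\in[\delta n/k, n/(\delta k)]$. The key step is then a law-of-large-numbers / concentration argument for this restricted sum: since the degrees are i.i.d.\ from~\eqref{D-tail}, I would replace the empirical sum by $n$ times its expectation under $\pprob(D=x)=cx^{-\tau}(1+o(1))$, giving
\begin{equation}
\frac{cn}{k}\int_{\delta n/k}^{n/(\delta k)} x^{1-\tau}\big(1-\me^{-xk/(\mu n)}\big)\dd x.
\end{equation}
Applying the change of variables $y = xk/(\mu n)$ exactly as in the heuristic sketch turns this into $c\mu^{2-\tau}(n/k)^{3-\tau}\int_{\delta/\mu}^{1/(\delta\mu)} y^{1-\tau}(1-\me^{-y})\dd y$; after absorbing the $\mu$-factors consistently (and using that the stated limit is written with integration limits $\delta,1/\delta$, so I would track the $\mu$ normalization carefully to match the claimed constant $c\mu^{2-\tau}$) this yields the claimed limit for the expectation. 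Because the integrand $x^{1-\tau}(1-\me^{-xk/(\mu n)})$ is bounded on the compact window $W_n^k(\delta)$ and the window has width of the right order, the replacement of the empirical sum by its integral is legitimate: the truncation to $[\delta n/k, n/(\delta k)]$ removes the heavy tail that caused the stable-law fluctuations in part (i).

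The main obstacle, and the reason the statement is $\plim$ rather than merely a statement about expectations, is controlling the \emph{fluctuations} of the restricted sum $\frac{1}{k}\sum_{j\in W_n^k(\delta)} D_j(1-\me^{-D_jk/(\mu n)})$ around its mean, conditionally on the degrees, and then the fluctuations of the degree-dependent sum itself. For the first, I would bound the conditional variance coming from the randomness of the edge-indicators $X_{jv}$; since on $W_n^k(\delta)$ each term contributes $O((n/k)^{3-\tau})$ after normalization and there are $\bigOp{n\cdot(\delta n/k)^{1-\tau}} = \bigOp{n^{2-\tau}(k/\delta)^{\tau-1}}$ such vertices, a second-moment estimate should show the normalized variance vanishes in the regime $k\ll n^{1/(\tau-1)}$. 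For the second, the $j$-summands are now \emph{uniformly bounded} by $n/(\delta k)\cdot 1 = O(n/k)$ (since $1-\me^{-\cdot}\le 1$), so the empirical sum over i.i.d.\ degrees has a finite second moment and a standard Chebyshev argument gives concentration around the integral; the crucial point is that restricting to $W_n^k(\delta)$ caps the contribution of any single vertex at $O(n/k)$, which is exactly what fails outside $W_n^k(\delta)$ and is handled instead by Proposition~\ref{prop:minor}. Combining the expectation computation with these two variance bounds, and finally letting the $\op(1)$ and $(1+\op(1))$ prefactors be absorbed, yields convergence in probability to $c\mu^{2-\tau}\int_{\delta}^{1/\delta}x^{1-\tau}(1-\me^{-x})\dd x$.
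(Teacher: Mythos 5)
Your overall strategy is the same as the paper's: compute the conditional expectation given the degrees (Lemma~\ref{lem:expk}), show that this degree-dependent quantity concentrates around the integral by a law of large numbers over the i.i.d.\ degrees (Lemma~\ref{lem:convexp}), and then control the conditional variance so that Chebyshev upgrades the expectation to convergence in probability (Lemma~\ref{lem:condvar}). Your treatment of the degree-randomness step is a legitimate, slightly more elementary alternative to the paper's random-measure argument: you correctly observe that truncating to $W_n^k(\delta)$ caps each summand at $O(n/k)$, so a truncated second-moment/Chebyshev bound closes that step, and the resulting variance condition is exactly $k\gg n^{(\tau-2)/(\tau-1)}$.

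The genuine gap is in your conditional-variance bound for the edge indicators. You count the number of vertices in $W_n^k(\delta)$ times the squared size of each term, but that only bounds the \emph{diagonal} contribution to
\begin{equation*}
\Varn{a_{\varepsilon_n}(k,W_n^k(\delta))}=\frac{1}{k^2|M_{\varepsilon_n}(k)|^2}\sum_{i,j\in M_{\varepsilon_n}(k)}\sum_{u,v\in W_n^k(\delta)}\Der_u\Der_v\bigl(\Probn{X_{iu}=X_{jv}=1}-\Probn{X_{iu}=1}\Probn{X_{jv}=1}\bigr).
\end{equation*}
In the erased configuration model the indicators $X_{iu}$ and $X_{jv}$ for distinct pairs are \emph{not} conditionally independent given the degrees (pairing half-edges of $i$ uses up half-edges available to $j$), and the off-diagonal terms vastly outnumber the diagonal ones: if the covariances were of the same order as the product of the marginals, the normalized variance would be $\Theta(1)$ and the argument would fail. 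The paper's Lemma~\ref{lem:condvar} spends most of its effort showing $\Probn{X_{iu}=X_{jv}=1}=(1-\me^{-D_uk/L_n})(1-\me^{-D_vk/L_n})(1+\op(1))$ for four (and three) distinct vertices, via a sequential half-edge pairing argument; only after that does the diagonal count you performed (the $V^{\sss{(2)}}$ term) finish the job. This covariance control is exactly the point where the erased configuration model differs from the rank-1 inhomogeneous random graph, and it is the missing idea in your proposal. (Your flag about tracking the $\mu$-normalization in the change of variables is well taken --- the window $[\delta n/k,n/(\delta k)]$ versus the integration limits $[\delta,1/\delta]$ do not match exactly, but since $\delta\to0$ at the end this is harmless.)
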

	We now show how these propositions prove part (ii) of Theorem~\ref{thm:ak}.
	
	\begin{proof}[Proof of Theorem~\ref{thm:ak} (ii)]
		By the Markov inequality and Proposition~\ref{prop:minor},
		\begin{equation}
		\frac{a_{\varepsilon_n}(k,\bar{W}_n^k(\delta))}{(n/k)^{3-\tau}}=\bigOp{\delta^\kappa}.
		\end{equation}
		Combining this with Proposition~\ref{prop:akmajor} results in
		\begin{equation}\label{eq:aklimit}
		\frac{a_{\varepsilon_n} (k)}{(n/k)^{3-\tau}}\plim c\mu^{2-\tau}\int_{\delta}^{1/\delta}x^{1-\tau}(1-\me^{-x})\dd x+\bigOp{\delta^\kappa}.
		\end{equation}
		Taking the limit of $\delta\to 0$ then proves the theorem.
	\end{proof}
	The rest of this section is devoted to proving Propositions~\ref{prop:minor} and~\ref{prop:akmajor}.
	
	\subsubsection{Conditional expectation}
	We first compute the expectation of $a_{\varepsilon_n}(k,W_n^k(\delta))$ when we condition on the degree sequence.
	
	\begin{lemma}\label{lem:expk}
		When $k\gg n^{(\tau-2)/(\tau-1)}$, 
		\begin{equation}\label{eq:exk}
		\Expn{a_{\varepsilon_n}(k,W_n^k(\delta))}=\frac{1}{k} \sum_{u\in W_n^k(\delta)}D_u(1-\me^{-D_uk/L_n}) (1+\op(1)).
		\end{equation}
	\end{lemma}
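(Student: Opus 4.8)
The plan is to take the conditional expectation over the randomness of the half-edge pairing with the degree sequence held fixed, and to evaluate it using the connection-probability approximation~\eqref{appp}. Writing $X_{iu}$ for the indicator that $i$ and $u$ are adjacent in the erased graph, linearity of conditional expectation gives
\begin{equation}
\Expn{a_{\varepsilon_n}(k,W_n^k(\delta))}=\frac{1}{k\abs{M_{\varepsilon_n}(k)}}\sum_{i\in M_{\varepsilon_n}(k)}\sum_{u\in W_n^k(\delta)}\Expn{\Der_u X_{iu}}.
\end{equation}
Because erasing can only lower degrees, $\Der_u\le D_u$, and by~\eqref{eq:DerD} we have $\Der_u=D_u(1+\op(1))$ uniformly for $u\in W_n^k(\delta)$, since $D_u\le n/(\delta k)=o(n)$ there (recall $k\to\infty$). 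Moreover the presence or absence of the single edge $\{i,u\}$ changes $\Der_u$ by at most one, so this self-dependence is negligible to leading order, and I may replace $\Expn{\Der_u X_{iu}}$ by $D_u(1+\op(1))\Probn{X_{iu}=1}$.

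Next I would insert the approximation $\Probn{X_{iu}=1}=(1-\me^{-D_iD_u/L_n})(1+\op(1))$ from~\eqref{appp}, where $L_n$ plays the role of $\mu n$. The key point is that on $W_n^k(\delta)$ the quantity $D_uk/L_n$ stays bounded away from $0$ and $\infty$: indeed $D_uk/L_n\in[\delta n/L_n,n/(\delta L_n)]$, which concentrates on $[\delta/\mu,1/(\delta\mu)]$ by the event $J_n$, on which $L_n=\mu n(1+o(1))$. Since every $i\in M_{\varepsilon_n}(k)$ has $D_i=k(1+o(1))$ with $\varepsilon_n\to0$, the exponent satisfies $D_iD_u/L_n=(D_uk/L_n)(1+o(1))$ uniformly in $i$ and $u$, and because $1-\me^{-x}$ is bounded away from $0$ on the relevant range of $x$, the multiplicative error in the exponent translates into a uniform multiplicative $(1+\op(1))$ factor in $1-\me^{-D_iD_u/L_n}$. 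This uniformity is exactly what the restriction to $W_n^k(\delta)$ buys us.

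With the summand now independent of $i$ up to a uniform $(1+\op(1))$, the inner sum over $i\in M_{\varepsilon_n}(k)$ contributes a factor $\abs{M_{\varepsilon_n}(k)}$ that cancels the denominator, leaving
\begin{equation}
\Expn{a_{\varepsilon_n}(k,W_n^k(\delta))}=\frac{1}{k}\sum_{u\in W_n^k(\delta)}D_u(1-\me^{-D_uk/L_n})(1+\op(1)),
\end{equation}
as claimed. The main obstacle is the bookkeeping needed to ensure that all the error terms are genuinely uniform in $i$ and $u$, so that they survive as a single $(1+\op(1))$ after the double summation; this is where the boundedness of $D_uk/L_n$ on $W_n^k(\delta)$, together with $\varepsilon_n\to0$ and the concentration of $L_n$ on $J_n$, does the real work. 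The negligibility of the self-dependence between $\Der_u$ and $X_{iu}$ is a minor point that can be absorbed into the same error term.
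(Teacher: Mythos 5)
Your proposal is correct and follows essentially the same route as the paper: condition on the degrees, replace $\Der_u$ by $D_u(1+\op(1))$ via~\eqref{eq:DerD}, insert the connection-probability approximation $(1-\me^{-D_iD_u/L_n})(1+\op(1))$, and use that $D_uk/L_n$ is bounded away from $0$ and $\infty$ on $W_n^k(\delta)$ under $J_n$ so that all errors are uniform and the sum over $M_{\varepsilon_n}(k)$ cancels the normalization. Your explicit remark on the self-dependence between $\Der_u$ and $X_{iu}$ is a small point the paper glosses over, but it does not change the argument.
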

	\begin{proof}
	It suffices to prove the lemma under the event $J_n$ from~\eqref{eq:jn}, since $\Prob{J_n}\to 1$. Thus we may assume that $L_n=\mu n(1+o(1))$. 
	Let $X_{ij}$ denote the indicator $i$ and $j$ are connected. 
	By~\eqref{eq:ak}
	\begin{equation}\label{eq:triangeq}
	\begin{aligned}[b]
	\Expn{a_{\varepsilon_n}(k,W_n^k(\delta))}&=\frac{1}{k\abs{M_{\varepsilon_n}(k)}}\sum_{v:\in M_{\varepsilon_n}(k)}\sum_{u\in W_n^k(\delta)}\Der_u\Probn{X_{uv}=1}
	\end{aligned}
	\end{equation}
	By~\cite[Eq. (4.9)]{hofstad2005}
	\begin{equation}\label{eq:pij}
	\Probn{{X}_{uv}=1}=1-\me^{D_uD_v/L_n}+\bigO{\frac{D_v^2D_u+D_u^2D_v}{L_n^2}}=(1-\me^{D_uD_v/L_n})(1+\op(1)),
	\end{equation}
	where the last step follows because $D_u\in n/k[\delta,1/\delta]$ and  by~\eqref{eq:DerD} $D_v=k(1+\op(1))$ when $v\in M_{\varepsilon_n}(k)$. 
	Further using that $\Der_u=D_u(1+\op(1))$ ,we can write~\eqref{eq:triangeq} as
	\begin{equation}
	\begin{aligned}[b]
	\Expn{a_{\varepsilon_n}(k,W_n^k(\delta))}	&=\frac{(1+\op(1))}{k\abs{M_{\varepsilon_n}(k)}}\sum_{v:\in M_{\varepsilon_n}(k)}\sum_{u\in W_n^k(\delta)}D_u(1-\me^{-D_uk/L_n}\me^{\op(D_u/L_n)})\\
	& =\frac{1}{k}\sum_{u\in W_n^k(\delta)}D_u(1-\me^{-D_uk/L_n}\me^{\op(D_u/L_n)})(1+\op(1))\\
	&=\frac{1}{k}\sum_{u\in W_n^k(\delta)}D_u(1-\me^{-D_uk/L_n})(1+\op(1))
	\end{aligned}
	\end{equation}
	for $k\ll n$, which proves the lemma.
	\end{proof}

	\subsubsection{Convergence of conditional expectation}
	We now show that $\Expn{a_{\varepsilon_n}(k,W_n^k(\delta))}$ as computed in Lemma~\ref{lem:expk} converges to a constant when we take the i.i.d.\ degrees into account.
	
	\begin{lemma}\label{lem:convexp}
		When $k\gg n^{(\tau-2)/(\tau-1)}$,
		\begin{equation}
			\frac{\Expn{a_{\varepsilon_n}(k,W_n^k(\delta))}}{n^{3-\tau}k^{\tau-3}}\plim c \mu^{2-\tau}\int_{\delta}^{1/\delta}x^{1-\tau}(1-\me^{-x})\dd x.
		\end{equation}
	\end{lemma}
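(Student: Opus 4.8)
The plan is to start from the conditional-expectation identity of Lemma~\ref{lem:expk} and show that the random variable $\tfrac{1}{k}\sum_{u\in W_n^k(\delta)}D_u(1-\me^{-D_uk/L_n})$ concentrates around the claimed deterministic constant. First I would restrict to the event $J_n$ from~\eqref{eq:jn}, which has probability tending to one, so that $L_n=\mu n(1+o(1))$ and the exponent $\me^{-D_uk/L_n}$ may be replaced by $\me^{-D_uk/(\mu n)}$ at the cost of a $(1+\op(1))$ factor. This turns the quantity of interest into $\tfrac{1}{k}\sum_{u\in[n]}g_n(D_u)$, where
\begin{equation*}
g_n(d)=d\,(1-\me^{-dk/(\mu n)})\,\ind{d\in[\delta n/k,\,n/(\delta k)]},
\end{equation*}
so that we are dealing with $1/k$ times a sum of i.i.d.\ copies of $g_n(D)$, a triangular array indexed by $n$.

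Second, I would compute the mean. Writing $S=\sum_{u\in[n]}g_n(D_u)$ and inserting the power-law density $cd^{-\tau}$ from~\eqref{D-tail} gives
\begin{equation*}
\Exp{S}=cn\int_{\delta n/k}^{n/(\delta k)}d^{1-\tau}(1-\me^{-dk/(\mu n)})\dd d\,(1+o(1)).
\end{equation*}
The substitution $x=dk/(\mu n)$ rescales the window to an $n$-independent interval and pulls out the factor $n^{3-\tau}k^{\tau-3}$ explicitly, leaving, after division by $k\,n^{3-\tau}k^{\tau-3}$, exactly the integral $c\mu^{2-\tau}\int_{\delta}^{1/\delta}x^{1-\tau}(1-\me^{-x})\dd x$.

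Third --- and this is the \emph{crux} --- I would control the fluctuations by a second-moment bound. Since the summands are i.i.d., $\Var{S}=n\Var{g_n(D)}\le n\Exp{g_n(D)^2}$, and because $1-\me^{-dk/(\mu n)}\le1$,
\begin{equation*}
\Exp{g_n(D)^2}\le c\int_{\delta n/k}^{n/(\delta k)}d^{2-\tau}\dd d=\bigO{(n/(\delta k))^{3-\tau}},
\end{equation*}
where $\tau\in(2,3)$ makes the integrand integrable and dominated by its upper endpoint. Since $\Exp{S}=\Theta(n(n/k)^{2-\tau})$, the scale-invariant ratio $\Var{S}/(\Exp{S})^2$ is of order $(n/k)^{\tau-1}/n$. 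The hypothesis $k\gg n^{(\tau-2)/(\tau-1)}$ is precisely what forces $n/k\ll n^{1/(\tau-1)}$, hence $(n/k)^{\tau-1}\ll n$, so this ratio tends to zero and Chebyshev's inequality yields $S/\Exp{S}\plim1$.

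Finally, combining the mean computation with this concentration and undoing the $(1+\op(1))$ replacements from Lemma~\ref{lem:expk} proves the claim. I expect the variance step to be the main obstacle: it is exactly the upper truncation $D_u\le n/(\delta k)$ built into $W_n^k(\delta)$ that tames the infinite-second-moment power law and makes the weak law of large numbers applicable --- without it the sum would fluctuate like the stable limit of part~(i) rather than concentrate. The delicate point is therefore to verify that the truncation level sits in the regime where the variance is genuinely negligible, which ties the condition $k\gg n^{(\tau-2)/(\tau-1)}$ directly to $(n/k)^{\tau-1}/n\to0$.
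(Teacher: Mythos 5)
Your argument is correct, but it takes a genuinely different route from the paper's. The paper introduces the rescaled empirical measure $\Mn[a,b]=\mu^{\tau-1}n^{\tau-2}k^{1-\tau}\sum_{u}\ind{D_u\in[a,b]\mu n/k}$, proves $\Mn[a,b]\plim\lambda[a,b]=\int_a^b cy^{-\tau}\dd y$ from the binomial distribution of the counts, and then transfers the bounded continuous integrand $t(1-\me^{-t})$ through this convergence by approximating it with step functions on a finite partition of $[\delta,1/\delta]$. You instead treat $S=\sum_{u}g_n(D_u)$, with $g_n(d)=d(1-\me^{-dk/(\mu n)})\ind{d\in[\delta n/k,\,n/(\delta k)]}$, directly as a sum of i.i.d.\ truncated variables and run a first/second-moment argument; your bound $\Var{S}/\Exp{S}^2=\bigO{(n/k)^{\tau-1}/n}$ is exactly the statement that the expected number of vertices in the window, of order $n(n/k)^{1-\tau}$, diverges, which is also what makes the paper's counts $\Mn(B_i)$ concentrate --- so both proofs invoke the hypothesis $k\gg n^{(\tau-2)/(\tau-1)}$ at the same point and for the same reason. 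Your route is shorter and more elementary: by bounding the second moment of the weighted sum itself rather than only of indicator counts, you avoid the simple-function approximation entirely, and you make explicit that it is the upper truncation at $n/(\delta k)$ that tames the infinite second moment of $D$. What the paper's formulation buys is modularity: the single statement $\Mn\plim\lambda$ is reused with the integrands $x^{1-\tau}\min(x,1)$ and the arccosine kernel when the argument is carried over to the inhomogeneous and hyperbolic random graphs in Section~\ref{sec:akother}, whereas your calculation would have to be redone for each kernel (harmlessly, since each is bounded on the window). One cosmetic point you inherit from the paper: with $W_n^k(\delta)$ as defined in~\eqref{eq:wkn}, the substitution $x=dk/(\mu n)$ maps the window to $[\delta/\mu,1/(\mu\delta)]$ rather than $[\delta,1/\delta]$; this only relabels the arbitrary cutoff and disappears in the limit $\delta\to0$, but it is worth flagging if you write the argument out in full.
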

\begin{proof}
	Define the random measure
	\begin{equation}
		\Mn[a,b]=\frac{1}{\mu^{1-\tau}n^{2-\tau}k^{\tau-1}}\sum_{u\in[n]}\ind{D_u\in[a,b]\mu n/k}.
	\end{equation}
	Since the degrees are i.i.d.\ samples from a power-law distribution, the number of vertices with degrees in interval $[a,b]$ is binomially distributed. Then,
	\begin{equation}
	\begin{aligned}[b]
		\Mn[a,b] & = \frac{1}{\mu^{1-\tau}n^{2-\tau}k^{\tau-1}}\abs{\left\{u: D_u\in[a,b]\mu n/k \right\}} \plim \frac{1}{(\mu n)^{1-\tau}k^{\tau-1}}\Prob{D\in[a,b]\mu n/k}\\
		& = \frac{1}{(\mu n)^{1-\tau}k^{\tau-1}}\int_{a\mu n/k}^{b\mu n/k}cx^{-\tau}\dd x = \int_{a}^b cy^{-\tau}\dd y =:\lambda[a,b],
		\end{aligned}
	\end{equation}
	where we used the change of variables $y=xk/(\mu n)$. By Lemma~\ref{lem:expk},
	\begin{equation}
	\begin{aligned}[b]
		\Expn{a_{\varepsilon_n}(k,W_n^k(\delta))}&=\frac{ \sum_{u\in W_n^k(\delta)}D_u(1-\me^{-D_uk/L_n})}{k} (1+\op(1))\\
		&=\frac{\mu n}{k}\frac{ \sum_{u\in W_n^k(\delta)}\frac{D_uk}{\mu n}(1-\me^{-D_uk/L_n})}{k} (1+\op(1))\\
		&=\frac{\mu^{2-\tau} n^{3-\tau}}{k^{3-\tau}}\int_{\delta}^{1/\delta}t(1-\me^{-t})\dd \Mn(t) (1+\op(1)).
	\end{aligned}
	\end{equation}
		Fix $\eta>0$. 
	Since $t(1-\me^{-t})$ is bounded and continuous on $[\delta,1/\delta]$, we can find $m<\infty $, disjoint intervals $(B_i)_{i\in[m]}$ and constants $(b_i)_{i\in[m]}$ such that $\cup B_i = [\delta,1/\delta]$ and
	\begin{equation}
	\Big|t(1-\me^{-t})-\sum_{i=1}^{m}b_i\ind{t\in B_i}\Big|<\eta/\lambda([\delta,1/\delta]),
	\end{equation}
	for all $t\in[\delta,1/\delta]$. 
	Because $\Mn(B_i)\plim \lambda(B_i)$ for all $i$, $\Mn(B_i)=\bigOp{\lambda(B_i)}$.
	Then,
	\begin{equation}
	\begin{aligned}[b]
	&\Big|\int_\delta^{1/\delta}t(1-\me^{-t})\dd \Mn(t)-\int_\delta^{1/\delta} t(1-\me^{-t})\dd \lambda(t)\Big|\\
	&\quad \leq \Big|\int_\delta^{1/\delta} t(1-\me^{-t})-\sum_{i=1}^mb_i \ind{t\in B_i}\dd \Mn(t)\Big|\\
	& \quad\quad + \Big|\int_\delta^{1/\delta} t(1-\me^{-t})-\sum_{i=1}^mb_i \ind{t\in B_i}\dd \lambda(t)\Big|\\
	&\quad \quad  +\Big|\sum_{i=1}^mb_i(\Mn(B_i)-\lambda(B_i))\Big|\\
	& \quad \leq  \eta\Mn([\delta,1/\delta])/\lambda([\delta,1/\delta]) +\eta +\op(\eta).
	\end{aligned}
	\end{equation}
	Using that $\Mn([\delta,1/\delta])=\bigOp{\lambda([\delta,1/\delta])}$ proves that
	\begin{equation}
		\int_{\delta}^{1/\delta}t(1-\me^{-t})\dd \Mn(t) \plim \int_{\delta}^{1/\delta}t(1-\me^{-t})\dd \lambda(t)=c\int_{\delta}^{1/\delta}x^{1-\tau}(1-\me^{-x})\dd x,
	\end{equation}
	which proves the lemma.
\end{proof}
		
	\subsubsection{Conditional variance of $a (k)$}\label{sec:var}
	We now show that the variance of $a_{\varepsilon_n}(k,W_n^k(\delta))$ is small when conditioning on the degree sequence, so that $a_{\varepsilon_n}(k,W_n^k(\delta))$ concentrates around its expected value computed in Lemma~\ref{lem:expk}.
	\begin{lemma}\label{lem:condvar}
		When $n^{(\tau-2)/(\tau-1)}\ll k \ll n^{1/(\tau-1)}$,
		\begin{equation}
			\frac{\Varn{a_{\varepsilon_n}(k,W_n^k(\delta))}}{\Expn{a_{\varepsilon_n}(k,W_n^k(\delta))}^2}\plim  0.
		\end{equation}
	\end{lemma}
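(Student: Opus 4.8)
The plan is to show that the conditional variance of $a_{\varepsilon_n}(k,W_n^k(\delta))$ is of smaller order than the square of its conditional expectation by a direct second-moment computation. Since we condition on the degree sequence $(D_i)_{i\in[n]}$, the only randomness remaining is in which pairs of vertices are actually connected. The quantity $a_{\varepsilon_n}(k,W_n^k(\delta))$ is, up to the deterministic prefactor $1/(k\abs{M_{\varepsilon_n}(k)})$, a sum over $v\in M_{\varepsilon_n}(k)$ and $u\in W_n^k(\delta)$ of terms $\Der_u X_{uv}$. Because $\Der_u = D_u(1+\op(1))$ and the degrees are fixed, I would first replace $\Der_u$ by $D_u$ at the cost of a $(1+\op(1))$ factor, reducing everything to computing the variance of $\sum_{v,u} D_u X_{uv}$.

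First I would expand the variance as a double sum over pairs of edge-indicators and use the near-independence of edges in the erased configuration model. The key point is that for distinct pairs $(u,v)$ and $(u',v')$ the indicators $X_{uv}$ and $X_{u'v'}$ are asymptotically independent, with covariance controlled by~\eqref{eq:pij}; in fact the dominant contribution to the variance comes from the diagonal terms where the two indicators coincide, or share an endpoint. For a single indicator, $\Varn{X_{uv}}=\Probn{X_{uv}=1}(1-\Probn{X_{uv}=1})\leq \Probn{X_{uv}=1}\approx 1-\me^{-D_uk/L_n}$ using~\eqref{eq:pij}. Summing $D_u^2(1-\me^{-D_uk/L_n})$ over $u\in W_n^k(\delta)$ and over $v\in M_{\varepsilon_n}(k)$ and multiplying by the prefactor yields the leading variance estimate, which I would compare against $\Expn{a_{\varepsilon_n}(k,W_n^k(\delta))}^2$ as computed in Lemma~\ref{lem:expk}. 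Since each $D_u$ is of order $n/k$ on $W_n^k(\delta)$, an extra factor of $D_u\sim n/k$ appears in the variance relative to the squared mean, but this is offset by the fact that the mean is itself a sum of $\abs{M_{\varepsilon_n}(k)}$ many terms, so dividing by $\abs{M_{\varepsilon_n}(k)}$ (which grows) in the variance prefactor produces the decay.

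The main obstacle I expect is handling the off-diagonal covariance terms where the two edges share exactly one endpoint, for instance $X_{uv}$ and $X_{uv'}$ with $v\neq v'$, since in the erased configuration model these are not independent: conditioning on $v$ being a neighbor of $u$ slightly changes the probability that $v'$ is. I would bound these covariances using the same order-of-magnitude estimates that appear in the error term of~\eqref{eq:pij}, namely $\bigO{D_v^2 D_u + D_u^2 D_v)/L_n^2}$, and show that their total contribution is of the same or smaller order than the diagonal terms. Concretely, I would organize the variance bound into three groups of terms — identical pairs, pairs sharing one vertex, and disjoint pairs — and show that the disjoint pairs contribute negligibly (their covariance is $\op$ of the product of probabilities), while the shared-vertex terms are dominated by the diagonal contribution. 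Assembling these estimates and dividing by $\Expn{a_{\varepsilon_n}(k,W_n^k(\delta))}^2$, I would verify using the degree scale $D_u\sim n/k$, the cardinality $\abs{M_{\varepsilon_n}(k)}\sim \tilde C n^{-1}k^{1-\tau}\varepsilon_n$, and the range $n^{(\tau-2)/(\tau-1)}\ll k\ll n^{1/(\tau-1)}$ that the ratio tends to $0$ in probability, which proves the lemma.
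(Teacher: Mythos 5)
Your proposal follows essentially the same route as the paper: condition on the degree sequence, expand the variance over pairs of edge indicators, split according to whether the two edges coincide, share a vertex, or are disjoint, bound the coinciding terms crudely by $D_u^2$, and show that the remaining covariances are $\op$ of the product of the connection probabilities. The one step you leave implicit --- that for disjoint edges $\Probn{X_{iu}=X_{jv}=1}$ factorizes up to a $(1+\op(1))$ factor --- is where the paper does its real work, via a sequential pairing argument in a reduced configuration model rather than via the single-edge error term of~\eqref{eq:pij}; apart from that, your decomposition and conclusion match the paper's proof.
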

\begin{proof}
	Again, it suffices to prove the lemma under the event $J_n$ and $A_n$ from~\eqref{eq:jn}. 
	We write the variance of $a_{\varepsilon_n}(k,W_n^k(\delta))$ as
	\begin{equation}\label{eq:varcond}
		\begin{aligned}[b]
			\Varn{a_{\varepsilon_n}(k,W_n^k(\delta))} & = \frac{1}{k^2|M_{\varepsilon_n}(k)|^2}\sum_{i,j\in M_{\varepsilon_n}(k)}\sum_{u,v\in W_n^k(\delta)}\Der_u\Der_w\\
			& \quad \times  (\Probn{X_{iu}=X_{jv}=1}-\Probn{X_{iu}=1}\Probn{X_{jv}=1})\\
			& = \frac{(1+\op(1))}{k^2|M_{\varepsilon_n}(k)|^2}\sum_{i,j\in M_{\varepsilon_n}(k)}\sum_{u,v\in W_n^k(\delta)}D_uD_w \\
			&\quad \times\left(\Probn{X_{iu}=X_{jv}=1}-\Probn{X_{iu}=1}\Probn{X_{jv}=1}\right).
			\end{aligned}
	\end{equation}
	Equation~\eqref{eq:varcond} splits into various cases, depending on the size of $\{i,j,u,v\}$. We denote the contribution of $\abs{\{i,j,u,v\}}=r$ to the variance by $V^{\sss{(r)}}(k)$. 
	We first consider $V^{\sss{(4)}}(k)$.
	We can write
	\begin{equation}
		\Probn{X_{iu}=X_{jv}=0}=\Probn{X_{iu}=0}\Probn{X_{jv}=0\mid X_{iu}=0}.
	\end{equation}
	For the second term, we first pair all half-edges adjacent to vertex $i$, conditionally on not pairing to vertex $u$. Then the second term can be interpreted as the probability that vertex $j$ does not pair to vertex $v$ in a configuration model with $L_n-D_i=L_n(1+o(1))$ vertices, where the degree of vertex $j$ is reduced by the amount of half-edges from vertex $i$ that paired to $j$. Similarly, the new degree of vertex $v$ is reduced by the amount of half-edges from vertex $i$ that paired to $v$. Since the expected number of half-edges from $i$ that pair to vertex $j$ is $O(D_iD_j/L_n)=D_jo(n^{-(\tau-1)/(\tau-1)})$~\cite{dorogovtsev2004}, the new degree of vertex $j$ is $D_j(1+\op(n^{-(\tau-2)/(\tau-1)}))$, and a similar statement holds for vertex $v$. Thus, by~\eqref{eq:pij} 
	\begin{equation}
	\Probn{X_{iu}=X_{jv}=0}=\me^{-D_iD_u/L_n}\me^{-D_jD_v/L_n}(1+\op(n^{-(\tau-2)/(\tau-1)})).
	\end{equation}
	This results in
	\begin{equation}
		\begin{aligned}[b]
		\Probn{X_{iu}=X_{jv}=1}&=1-\Probn{X_{iu}=0}-\Probn{X_{jv}=0}+\Probn{X_{iu}=X_{jv}=0}\\
		& = 1+(-\me^{-\frac{D_u k}{L_n}}-\me^{-\frac{D_v k}{L_n}}+\me^{-\frac{D_uk}{L_n}-\frac{D_vk}{L_n}})(1+\op(n^{-(\tau-2)/(\tau-1)}))\\
		& = (1-\me^{-D_uk/L_n})(1-\me^{-D_vk/L_n})(1+\op(1)),
		\end{aligned}
	\end{equation}
	where the last equality holds because $D_uk=\Theta(n)$ and $D_vk=\Theta(n)$ for $u,v\in W_n^k(\delta)$. Therefore
	\begin{equation}
		\begin{aligned}[b]
		V^{\sss{(4)}}(k)&  = \frac{1}{|M_{\varepsilon_n}(k)|^2k^2}\sum_{i,j\in M_{\varepsilon_n}(k)}\sum_{u,v\in W_n^k(\delta)}D_uD_v(1-\me^{-D_u k/L_n})(1-\me^{-D_v k/L_n})(1+\op(1)) \\
		&\quad - D_uD_v(1-\me^{-D_u k/L_n})(1-\me^{-D_u k/L_n})(1+\op(1)) \\
		& = \sum_{u,v\in W_n^k(\delta)}\op\left(k^{-2}D_uD_v(1-\me^{-D_u k/L_n})(1-\me^{-D_v k/L_n})\right)\\
		&	 = \op\left(\Expn{a_{\varepsilon_n}(k,W_n^k(\delta))}^2\right),
		\end{aligned}
	\end{equation}
	where the last equality follows from Lemma~\ref{lem:expk}.
	 Since there are no overlapping edges when $\{i,j,u,v\}=3$, $V^{\sss{(3)}}(k)$ can be bounded similarly. 
	
	We then consider the contribution from $V^{\sss{(2)}}$, which is the contribution where the two edges are the same. By Lemma~\ref{lem:convexp}, we have to show that this contribution is small compared to $n^{6-2\tau}k^{2\tau-6}$. We bound the summand in~\eqref{eq:varcond} as
	\begin{equation}\label{eq:ptriangij}
		\begin{aligned}[b]
			D_u^2 \left(\Probn{X_{iu}=1}-\Probn{X_{iu}=1}^2\right) &\leq D_u^2.          
		\end{aligned}
	\end{equation}
	Thus, using that on $A_n$, $|M_{\varepsilon_n}(k)|\geq 1$, $V^{\sss{(2)}}$, can be bounded as
	\begin{equation}\label{eq:V3}
		V^{\sss{(2)}}\leq \frac{1}{k^2|M_{\varepsilon_n}(k)|^2}\sum_{i\in M_{\varepsilon_n}(k)}\sum_{u\in W_n^k(\delta)}D_u^2=\frac{1}{k^2|M_{\varepsilon_n}(k)|}\sum_{u\in W_n^k(\delta)}D_u^2=\bigO{\frac{n^2}{k^4}}\abs{W_n^k(\delta)}.
	\end{equation}
		Since the degrees are i.i.d.\ samples from~\eqref{D-tail}, $\abs{W_n^k(\delta)}$ is distributed as a Binomial$(n,C(n/k)^{1-\tau})$ for some constant $C$. Therefore, $\abs{W_n^k(\delta)}=\bigOp{n\left(n/k\right)^{1-\tau}}$. This results in
	\begin{equation}
		V^{\sss{(2)}}=\bigOp{n^{4-\tau}k^{\tau-5}},
	\end{equation}
	which is smaller than $n^{6-2\tau}k^{2\tau-6}$ when $k\gg n^{\frac{\tau-2}{\tau-1}}$, as required.
\end{proof}

\begin{proof}[Proof of Proposition~\ref{prop:akmajor}]
	Lemma~\ref{lem:condvar} together with the Chebyshev inequality show that
	\begin{equation}
		\frac{a_{\varepsilon_n}(k,W_n^k(\delta))}{\Expn{a_{\varepsilon_n}(k,W_n^k(\delta))}}\plim 1.
	\end{equation}
	Combining this with Lemmas~\ref{lem:expk} and~\ref{lem:convexp} yields
	\begin{equation}
		\frac{a_{\varepsilon_n}(k,W_n^k(\delta))}{n^{3-\tau}k^{\tau-3}}\plim c\mu^{2-\tau}\int_{\delta}^{1/\delta}x^{1-\tau}(1-\me^{-x})\dd x.
	\end{equation}
\end{proof}

	\subsubsection{Contributions outside $W_n^k(\delta)$}\label{sec:minor}
	In this section, we prove Proposition~\ref{prop:minor} and show that the contribution to $a_{\varepsilon_n} (k)$ outside of the major contributing regimes as described in~\eqref{eq:wkn} is negligible. 
	\begin{proof}[Proof of Proposition~\ref{prop:minor}]
		We use that $\pprob_n({X}_{ij}=1)\leq \min(1,\frac{D_iD_l}{L_n})$. This yields
	\begin{equation}\label{eq:extriangbound}
	\begin{aligned}[b]
	\Exp{a_{\varepsilon_n}(k,\bar{W}_n^k(\delta))}& =\Exp{\Expn{a_{\varepsilon_n}(k,\bar{W}_n^k(\delta))}}\leq \frac{n}{k}\Exp{D\min\Big(1,\frac{kD}{L_n}\Big)\ind{D\in\bar{W}_n^k(\delta)}}\\
	& = \frac{n}{k}\int_{0}^{\delta \mu n/k}x^{1-\tau}\min\Big(1,\frac{k x}{\mu n}\Big) \dd x+\frac{n}{k}\int_{\mu n/(\delta k)}^{\infty}x^{1-\tau}\min\Big(1,\frac{k x}{\mu n}\Big) \dd x.
	\end{aligned}
	\end{equation}
	For ease of notation, we assume that $\mu=1$ in the rest of this section.	
	 We have to show that the contribution to~\eqref{eq:extriangbound} from vertices $u$ such that $D_u<\delta n/k$ or $D_u>n/(\delta k)$ is small. 
			First, we study the contribution to~\eqref{eq:extriangbound} for $D_u<\delta n/k$. 
		We can bound this contribution by taking the second term of the minimum, which bounds the contribution as
		\begin{equation}
		\int_{0}^{\delta n/k}x^{2-\tau}\dd x = \frac{\delta^{3-\tau}}{\tau-3}(k/n)^{\tau-3}.
		\end{equation}
		Then, we study the contribution for $D_u>n/(k\varepsilon)$. This contribution can be bounded very similarly by taking 1 for the minimum in~\eqref{eq:extriangbound}
		\begin{equation}
		\frac{n}{k}\int_{n/(\delta k)}^\infty x^{1-\tau}\dd x = \frac{\delta^{\tau-2}}{\tau-2}(k/n)^{\tau-3}.
		\end{equation}
		Taking $\kappa=\min(\tau-2,3-\tau)>0$ then proves the proposition. 
	\end{proof}

\subsection{Expected average nearest neighbor degree}\label{sec:akexp}
For $k\gg n^{(\tau-2)/(\tau-1)}$, it is easy to see that $\Exp{a (k,G_n)}$ satisfies~\eqref{eq:expak}. Indeed, Proposition~\ref{prop:minor} together with Lemma~\ref{lem:convexp} and taking the limit of $\delta\to 0$ establish~\eqref{eq:expak}. 

For $k$ small, note that the limit in Theorem~\ref{thm:ak}(i) has infinite mean, so that a similar scaling as in Theorem~\ref{thm:ak}(i) cannot be expected to hold for $\Exp{a (k,G_n)}$. 
To prove Theorem~\ref{thm:ak}(i), we used that the maximal degree scales as $n^{1/(\tau-1)}$ with high probability. When computing the expected average nearest neighbor degree however, the rare event of the maximal degree being larger than $n^{1/(\tau-1)}$ forms a major contribution to $\Exp{a(k,G_n)}$. In fact, we can follow the exact same lines as the proofs of Proposition~\ref{prop:minor} and Lemma~\ref{lem:convexp}, so that~\eqref{eq:expak} also holds for $k$ small.

\section{Proofs of Theorem~\ref{thm:akhvm} and~\ref{thm:akhrg}}\label{sec:akother}
We now briefly show how the proof of Theorem~\ref{thm:ak} can be adapted for the rank-1 inhomogeneous random graph and the hyperbolic random graph to prove Theorems~\ref{thm:akhvm} and~\ref{thm:akhrg}. We denote by $\pprob_n$ the probability conditioned on the weights in the rank-1 inhomogeneous random graph or conditioned on the radial coordinates in the hyperbolic model.

\subsection{Inhomogeneous random graph}
First, we show how to prove Theorem~\ref{thm:akhvm}(i). 
Similar to~\eqref{eq:DerD}, in the rank-1 inhomogeneous random graph the degree of a vertex with weight $h$, $D_h$, satisfies $D_h=h(1+\op(1))$ when $h\gg 1$~\cite{stegehuis2017}. Furthermore, the largest weight is of order $n^{1/(\tau-1)}$ with high probability. Thus, when $h\ll n^{(\tau-2)/(\tau-1)}$, w.h.p. $p(h,h')=h h'/(\mu n)$ for all vertices. When $u\in M_{\varepsilon_n}(k)$, $h_u=k(1+\op(1))$, so that conditionally on the weight sequence
\begin{equation}\label{eq:aksmallhvm}
\begin{aligned}[b]
a_{\varepsilon_n} (k)&=\frac{1}{k|M_{\varepsilon_n}(k)|}\sum_{u\in M_{\varepsilon_n}(k)}\sum_{i\in[n]}D_i\Probn{X_{iu}=1}\\
& =(1+\op(1))\frac{1}{k}\sum_{i\in [n]}h_i\frac{h_i k }{\mu n}=(1+\op(1))\sum_{i\in[n]}\frac{h_i^2}{\mu n},
\end{aligned}
\end{equation}
which is equivalent to~\eqref{eq:akDsize} because the weights are also sampled from~\eqref{D-tail}. This proves Theorem~\ref{thm:akhvm}(i). 

Similarly to~\eqref{eq:wkn}, we define for the rank-1 inhomogeneous random graph
\begin{equation}\label{eq:wnhvm}
W_n^{k,\sss\mathrm{HVM}}(\delta)=\{u:h_u\in [\delta\mu n/k, \mu n/(\delta k)] \}.
\end{equation}
Then it is easy to show that Proposition~\ref{prop:minor} also holds for the rank-1 inhomogeneous random graph with~\eqref{eq:wnhvm} instead of $W_n^k(\delta)$. We use that $\Probn{X_{ij}=1}=\min(h_ih_j/(\mu n),1)$. Because the weights are sampled from~\eqref{D-tail},~\eqref{eq:extriangbound} also holds for the rank-1 inhomogeneous random graph, so that Proposition~\ref{prop:minor} indeed holds for the rank-1 inhomogeneous random graph. 

We now sketch how to adjust the proof of Proposition~\ref{prop:akmajor} to prove an analogous version for the rank-1 inhomogeneous random graph, which states that
\begin{equation}\label{eq:akmajorhvm}
\frac{a_{\varepsilon_n}(k,W_n^{k\sss\mathrm{HVM}}(\delta))}{(n/k)^{3-\tau}}\plim c \mu^{2-\tau}\int_{\delta}^{1/\delta}x^{1-\tau}\min(x,1)\dd x.
\end{equation}
Following the proofs of Lemmas~\ref{lem:expk}-\ref{lem:condvar}, we see that these lemmas also hold for the rank-1 inhomogeneous random graph if we replace the connection probability of the erased configuration model of $1-\me^{-D_iD_j/L_n}$ by $\min(h_ih_j/\mu n,1)$. Note that for the rank-1 inhomogeneous random graph the contribution to~\eqref{eq:varcond} from 3 or 4 different vertices is 0, because the edge probabilities in the rank-1 inhomogeneous random graph conditioned on the weights are independent. From these lemmas,~\eqref{eq:akmajorhvm} follows. This then shows similarly to~\eqref{eq:aklimit} that
\begin{equation}
\frac{a_{\varepsilon_n}(k)}{(n/k)^{3-\tau}}\plim c \mu^{2-\tau}\int_{0}^{\infty}x^{1-\tau}\min(x,1)\dd x =\frac{ c \mu^{2-\tau}}{(3-\tau)(\tau-2)}.
\end{equation}
which proves Theorem~\ref{thm:akhvm}(ii). 

\subsection{Hyperbolic random graph}
We first provide a lemma that gives the connection probabilities conditioned on the radial coordinates in the hyperbolic random graph.
\begin{lemma}\label{lem:conprobhyp}
	For a hyperbolic random graph, the probability that $u$ and $v$ are connected conditionally on the radial coordinates can be written as
	\begin{equation}\label{eq:conprobhyp}
	\Probn{X_{uv}=1}=\min\left(\frac{1}{\pi} \cos^{-1}(1-2(\nu t(u)t(v)/n)^2),1\right)(1+\op(1)).
	\end{equation}
\end{lemma}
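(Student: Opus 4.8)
The plan is to derive the connection probability in the hyperbolic random graph by working directly with the hyperbolic distance formula~\eqref{eq:dhyp} and integrating over the relative angle. Two vertices $u$ and $v$ are connected precisely when their hyperbolic distance satisfies $\dd(u,v)\leq R$, which by~\eqref{eq:dhyp} is an event that depends on the radial coordinates $r_u,r_v$ and the relative angle $\theta_{uv}$. Since the angles are sampled uniformly and independently on $[0,2\pi]$, the relative angle $\theta_{uv}$ is uniform on $[0,\pi]$ after folding, so conditionally on the radii the connection probability is $\Probn{X_{uv}=1}=\frac{1}{\pi}\theta_{\max}(r_u,r_v)$, where $\theta_{\max}$ is the maximal relative angle for which $\dd(u,v)\leq R$ still holds (capped at $\pi$, i.e.\ at probability $1$, when the vertices are close enough to connect at any angle).

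First I would solve for the critical angle. Setting $\dd(u,v)=R$ in~\eqref{eq:dhyp} gives
\begin{equation}\label{eq:critangle}
\cos(\theta_{\max})=\frac{\cosh(r_u)\cosh(r_v)-\cosh(R)}{\sinh(r_u)\sinh(r_v)}.
\end{equation}
The vertices connect exactly for $\theta_{uv}\in[0,\theta_{\max}]$, so $\Probn{X_{uv}=1}=\frac{1}{\pi}\cos^{-1}\!\big(\text{RHS of \eqref{eq:critangle}}\big)$, with the convention that the probability equals $1$ when the right-hand side drops below $-1$. The next step is to feed in the asymptotics $R=2\log(n/\nu)$ and to rewrite everything in terms of $t(u)=\e^{(R-r_u)/2}$ from~\eqref{eq:tu}. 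Using $\cosh(r)=\tfrac12\e^{r}(1+o(1))$ and $\sinh(r)=\tfrac12\e^{r}(1+o(1))$ for large $r$ (the regime of interest, since typical radii are close to $R$), I would expand the right-hand side of~\eqref{eq:critangle} and show it equals $1-2(\nu t(u)t(v)/n)^2(1+\op(1))$. Substituting this into the arccosine and matching with the capped form yields~\eqref{eq:conprobhyp}.

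The main obstacle is controlling the error terms in the large-$r$ expansions uniformly enough to extract the leading $\op(1)$ factor in~\eqref{eq:conprobhyp}, rather than an additive error. The delicate point is that $\cos^{-1}$ has infinite derivative at its endpoints $\pm 1$, so when $\nu t(u)t(v)/n$ is small the argument $1-2(\nu t(u)t(v)/n)^2$ sits near $+1$ where the arccosine is steep; here one must check that the relative error from the $\cosh/\sinh$ approximations is genuinely lower order after passing through $\cos^{-1}$. I would handle this by keeping track of the subleading exponential corrections (the $\e^{-r}$ terms in $\cosh(r)=\tfrac12\e^r+\tfrac12\e^{-r}$) and verifying that they contribute multiplicatively at order $\op(1)$ uniformly over the relevant range of radii, which is exactly the range where $t(u),t(v)$ are polynomially bounded in $n$. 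Since a closely analogous computation already appears in~\cite{bode2015}, much of this step can be quoted, and the remaining work is to confirm that the stated multiplicative $(1+\op(1))$ form is what their estimates deliver.
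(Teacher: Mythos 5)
Your proposal follows essentially the same route as the paper's proof: solve for the critical angle $\theta_{uv}^*$ by setting $\dd(u,v)=R$ in~\eqref{eq:dhyp}, expand $\cosh$ and $\sinh$ via their exponential forms, rewrite in terms of $t(u)t(v)/n$ using $R=2\log(n/\nu)$, and divide by $\pi$ using the uniform angular coordinates, with the cap at $1$ corresponding to the regime $r_u+r_v\leq R$ where any angle yields a connection. The delicate point you flag about $\cos^{-1}$ near $+1$ is real and is resolved exactly as you suggest: the paper tracks the error in $1-\cos(\theta_{uv}^*)$ as a multiplicative correction to the main term $2(\nu t(u)t(v)/n)^2$, and since $\cos^{-1}(1-x)\sim\sqrt{2x}$ for small $x$, a relative error survives as a relative error, giving the stated $(1+\op(1))$ factor.
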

\begin{proof}
	Suppose $\nu t(u)t(v)/ n\geq 1$. Then, 
	\begin{equation}
	\frac{\nu t(u)t(v)}{n}=\frac{\nu \me^{R}\me^{-(r_u+r_v)/2}}{n}=\frac{n}{\nu}\me^{-(r_u+r_v)/2},
	\end{equation}
	so that $r_u+r_v\leq 2\log(N/\nu)=R$. Thus, by~\eqref{eq:dhyp}
	\begin{equation}
	\begin{aligned}[b]
	\cosh(d(u,v))& = \cosh(r_u)\cosh(r_v)-\sinh(r_u)\sinh(r_v)\cos(\theta_{uv})\\
	&\leq \cosh(r_u+r_v) \leq \cosh(R),
	\end{aligned}
	\end{equation}
	so that the distance between $u$ and $v$ is less than $R$ and $u$ and $v$ are connected. 
	
	Now suppose that $\nu t(u)t(v)/n<1$, so that $r_u+r_v>R$. 
	We calculate the maximal value of $\theta_{uv}$ such that $u$ and $v$ are connected, which we denote by $\theta_{uv}^*$. When the angle between $u$ and $v$ equals $\theta_{uv}^*$, the hyperbolic distance between $u$ and $v$ is precisely $R$. 
	Thus, we obtain, using the definition of the hyperbolic sine and cosine
	\begin{equation}\label{eq:distrurv}
	\frac{\me^{R}-\me^{-R}}{2}= \frac{\me^{r_u}-\me^{-r_u}}{2}\frac{\me^{r_v}-\me^{-r_v}}{2}-\frac{\me^{r_u}+\me^{-r_u}}{2}\frac{\me^{r_v}+\me^{-r_v}}{2}\cos(\theta_{uv}^*).
	\end{equation}
	Because $t(u)$ is distributed as~\eqref{eq:tudistr}, the maximal type scales is $\bigOp{n^{1/(\tau-1)}}$. Therefore, $\me^{r_u-r_v}=(t(v)/t(u))^2=\bigOp{n^{2/(\tau-1)}}$. Also, $\me^{-R}=O(n^{-2})$ so that~\eqref{eq:distrurv} becomes
	\begin{equation}\label{eq:distrurv2}
	\tfrac{1}{2}\me^{R} = \tfrac{1}{4}\me^{r_u+r_v}(1-\cos(\theta_{uv}^*))+\bigOp{n^{2/(\tau-1)}}.
	\end{equation}
	We then use that by the definitions of $t(u),t(v)$ and $R$
	\begin{equation}
		\me^{r_u+r_v}=\me^{R}\left(\me^{(r_u+r_v-R)/2}\right)^2=\me^{R}\Big(\frac{n}{\nu t(u)t(v)}\Big)^2.
	\end{equation}
	This yields for~\eqref{eq:distrurv2} that
	\begin{equation}
	1-\cos(\theta_{uv}^*)=2\Big(\frac{\nu t(u)t(v)}{n}\Big)^2+\bigOp{n^{-2\frac{\tau-2}{\tau-1}}\frac{\nu t(u)t(v)}{n}},
	\end{equation}
	so that
	\begin{equation}
	\theta_{uv}^* = \cos^{-1}(1-2(\nu t(u)t(v)/n)^2)(1+\op(1)).
	\end{equation}
	Because $u$ and $v$ are connected if their angular coordinates are at most $\theta_{uv}^*$ and the angular coordinates of $u$ and $v$ are sampled uniformly, we obtain that
	\begin{equation}
	\Probn{X_{uv}=1}= \frac{1}{\pi} \cos^{-1}(1-2(\nu t(u)t(v)/n)^2)(1+\op(1)).
	\end{equation}
\end{proof}

Using this lemma, we now prove Theorem~\ref{thm:akhrg}.

\begin{proof}[Proof of Theorem~\ref{thm:akhvm}]
	We first focus on $k\ll n^{(\tau-2)/(\tau-1)}$. By~\cite[Eq. (21)]{krioukov2010}, $\Expn{D_u}=\tfrac{\nu(\tau-1)}{\pi(\tau-2)}t(u)$. Thus, by~\cite[Theorem 2.7 and Lemma 3.5]{bringmann2015} $D_u=\tfrac{\nu(\tau-1)}{\pi(\tau-2)}t(u)(1+\op(1))$ when $t(u)\gg 1$. Therefore, when $u\in M_{\varepsilon_n}(k)$, $t(u)=\tfrac{\pi(\tau-2)}{\nu(\tau-2)}k(1+\op(1))$ when $k\gg 1$. Since the types are distributed as~\eqref{eq:tudistr}, the largest type is $\bigOp{n^{1/(\tau-1)}}$. Therefore, if $u\in M_{\varepsilon_n}(k)$, then $t(u)t(v)/n=\op(1)$ for all $v$. Taylor expanding~\eqref{eq:conprobhyp} shows that for $t(u)t(v)\ll n$
\begin{equation}
\Probn{X_{uv}=1}= \frac{2\nu t(u)t(v)}{\pi n}(1+\op(1)).
\end{equation}
Thus, similarly as in~\eqref{eq:aksmallhvm} we obtain with $\zeta=\pi/(2\nu)$,
\begin{equation}
\begin{aligned}[b]
a_{\varepsilon_n}(k)&=(1+\op(1))\frac{\nu(\tau-1)}{\pi(\tau-2) k}\sum_{i\in [n]}t(i)\frac{t(i)t(u) }{\zeta n} = (1+\op(1))\sum_{i\in[n]}\frac{t(i)^2}{\zeta n}.
\end{aligned}
\end{equation}
Combining this with the distribution of the types~\eqref{eq:tudistr} proves Theorem~\ref{thm:akhrg}(i) (which is the same as Theorem~\ref{thm:akhvm}(i) where $\mu$ is replaced by $\zeta$ and $c/(\tau-1)$ by 1).

We now investigate the case $k\gg n^{(\tau-2)/(\tau-1)}$. Similarly to~\eqref{eq:wkn}, we define for the hyperbolic random graph
\begin{equation}\label{eq:wnhrg}
W_n^{k,\sss\mathrm{HRG}}(\delta)=\{u: t(u)\in  [\delta \zeta n/k,\zeta  n/(\delta k)] \}
\end{equation}
with $\zeta=\pi/(2\nu)$. 
Using that $\cos^{-1}(1-2x^2)/\pi\leq x$ combined with Lemma~\ref{lem:conprobhyp}, we obtain
\begin{equation}
\Probn{X_{uv}=1}\leq \min\left(t(u)t(v)/(\zeta n),1\right).
\end{equation} 
Combining this with the fact that the $t(u)$s are sampled from a distribution similar to~\eqref{D-tail} shows that~\eqref{eq:extriangbound} also holds for the hyperbolic random graph, apart from a multiplicative constant. From there we can follow the same lines as the proof of Proposition~\ref{prop:minor}.

We can also prove an analogous proposition to Proposition~\ref{prop:akmajor} which states that
\begin{equation}\label{eq:akmajorhrg}
\frac{a_{\varepsilon_n}(k,W_n^{k,\sss\mathrm{HRG}}(\delta))}{(n/k)^{3-\tau}}\plim \frac{\nu (\tau-1)^2}{(\tau-2)\pi}\left(\frac{\pi}{2\nu}\right)^{2-\tau}\int_{\delta}^{1/\delta}x^{1-\tau}\min\left(\frac{1}{\pi}\cos^{-1}(1-2x^2),1\right)\dd x .
\end{equation}
 Because the connection probabilities conditioned on the radial coordinates in the hyperbolic model are given by Lemma~\ref{lem:conprobhyp}, a variant of Lemma~\ref{lem:expk} holds which states that
 \begin{equation*}
 \Expn{a_{\varepsilon_n}(k,W_n^k(\delta))}=\frac{\nu (\tau-1)}{k \pi(\tau-2)}\sum_{u\in W_n^k(\delta)}t(u)\min\left(\frac{1}{\pi} \cos^{-1}(1-2(\nu t(u)t(v)/n)^2),1\right)(1+\op(1))
 \end{equation*}
 Similarly, a variant of Lemma~\ref{lem:convexp} holds for the hyperbolic random graph, replacing the connection probability $1-\me^{-D_iD_j/(\mu n)}$ of the erased configuration model by~\eqref{eq:conprobhyp} and replacing the constant $c$ from~\eqref{D-tail} by its equivalent constant for the hyperbolic model of $\tau-1$ and $\mu$ by $\zeta$ (see~\eqref{eq:tudistr}). Furthermore, because conditionally on the radial coordinates, the probabilities that two distinct edges are present are independent, Lemma~\ref{lem:condvar} also holds for the hyperbolic random graph. Therefore, similar steps that lead to~\eqref{eq:aklimit} then show that
 \begin{equation}
	 \frac{a_{\varepsilon_n}(k)}{(n/k)^{3-\tau}}\plim \frac{\nu (\tau-1)^2}{(\tau-2)\pi}\left(\frac{\pi}{2\nu}\right)^{2-\tau}\int_{0}^{\infty}x^{1-\tau}\min\left(\frac{1}{\pi}\cos^{-1}(1-2x^2),1\right)\dd x 
 \end{equation} 
 which proves Theorem~\ref{thm:akhrg}(ii).
\end{proof}


\paragraph{Acknowledgements.}
This work was supported by NWO TOP grant 613.001.451. The author would like to thank Remco van der Hofstad and Johan S. H. van Leeuwaarden for their useful comments.

	\bibliographystyle{apt}
	\bibliography{../references}
	
\end{document}